\newcommand{\N}{\ensuremath{\mathbb N}}
\newcommand{\Z}{\ensuremath{\mathbb Z}}
\newcommand{\R}{\ensuremath{\mathbb R}}
\newcommand{\Q}{\ensuremath{\mathbb Q}}
\newcommand{\X}{\ensuremath{\mathbb X}}
\newcommand{\Sc}{\ensuremath{{\mathcal S}}}
\newcommand{\T}{\ensuremath{{\mathcal T}}}
\newcommand{\A}{\ensuremath{{\mathcal A}}}
\renewcommand{\rho}{\varrho}
\renewcommand{\phi}{\varphi}
\newcommand{\eps}{\varepsilon}
\DeclareMathOperator{\bd}{bd}
\newtheorem{thm}{Theorem}[section]
\newtheorem{cor}[thm]{Corollary}
\newtheorem{lem}[thm]{Lemma}
\theoremstyle{remark}
\newtheorem{rem}[thm]{Remark}
\begin{document}

\title{Inductive Rotation Tilings}

\author{Dirk Frettl\"oh}
\address{Technische Fakult\"at, Universit\"at Bielefeld,
{\tt www.math.uni-bielefeld.de/\symbol{126}frettloe}}

\author{Kurt Hofstetter}
\address{Wien, {\tt www.sunpendulum.at}}

\begin{abstract}
A new method for constructing aperiodic tilings is presented.
The method is illustrated by constructing a particular tiling and 
its hull. The properties of this tiling and the hull are studied. 
In particular it is shown that these tilings have a substitution rule, 
that they are nonperiodic, aperiodic, limitperiodic and pure point diffractive. 
\end{abstract}


\maketitle

\begin{center}
{\em Dedicated to Nikolai Petrovich Dolbilin on the occasion of his
70th birthday.}
\end{center}

\section{Introduction}

The discovery of the celebrated Penrose tilings \cite{P}, see also
\cite{GS, BG}, and of physical quasicrystals \cite{SBGC} gave rise to
the development of a mathematical theory of aperiodic order. Objects
of study are nonperiodic structures (i.e. not fixed by any 
nontrivial translation) that nevertheless possess a high degree of 
local and global order. In many cases the structures under 
consideration are either discrete point sets (Delone sets,
see for instance \cite{DLS}) or tilings (tilings are also called 
tesselations, see \cite{GS} for a wealth of results about tilings).
\footnote{{\em UDC Classification} 514}

Three frequently used construction methods for nonperiodic tilings are 
local matching rules, cut-and-project schemes and tile substitutions,
see \cite{BG} and references therein for all three methods. In this
paper we describe an additional way to construct nonperiodic 
tilings, the ``inductive rotation'', found in 2008 by the second author 
who is not a scientist but an artist. 
Variants of this construction have been considered 
before, see for instance \cite{FH} $\to$ ``People'' $\to$ ``Petra Gummelt'',
but up to the knowledge of the authors this construction does not appear
in the existing literature.

After describing the construction we will prove that the resulting
tilings are nonperiodic, aperiodic and limitperiodic, that they 
can be described as model sets, 
hence are pure point diffractive, that they possess uniform patch 
frequencies and that they are substitution tilings. (For an 
explanation of these terms see below). The latter property will be 
the key for most of the other results. We end with some remarks and 
open questions. In order to keep the 
paper as much self-contained as possible we try to explain all
terms here, but only as far as required to state the 
results. Where needed we provide references for further information. 

Let us fix some notation. A {\em tiling} of $\R^2$ is a collection of
tiles $\{ Q_i \, | \, i \in \N \}$ that is a covering (i.e. 
$\bigcup_{i \in \N} Q_i = \R^2$) as well as a packing (i.e. the 
intersection of the interiors of two distinct tiles $Q_i$ and $Q_j$ 
is empty). The $Q_i$ are called {\em tiles}
of the tiling. For our purposes it is fine to think of the tiles as
nice compact sets like squares or triangles. If all tiles in the tiling
belong to finitely many congruence classes $[T_1], \ldots, [T_m]$ then
we call the representatives $T_1, \ldots , T_m$ {\em prototiles} of the 
tiling. Any finite subset of a tiling 
is called a {\em patch}. Examples of patches of a tiling are obtained by
intersecting a tiling $\T$ with a ball $B_r(x)$, i.e. with an open ball of
radius $r$ with centre $x$. The intersection is defined as
\[ \T \cap B_r(x) := \{ T \in \T \, | \, T \subset B_r(x) \}. \] 
Sometimes we want to
equip the tiles with an additional attribute like colour or decoration.
Then we consider the prototiles equipped with the different colours
or decorations, too. One can formalise this by considering pairs $(T,i)$,
where $T$ is some tile and $i$ encodes the additional attribute. In
order to keep notation simple we keep in mind to distinguish a black
unit square from a red unit square when necessary without writing 
down the additional label $i$. 

A vector $t \in \R^2$ such that $\T +t = \T$ is called {\em period} of $\T$. 
(If $\T=\{ Q_i \, | \, i \in \N \}$ then $\T+t$ means $\{Q_i+t \, | \, 
i \in \N \}$.) A tiling $\T$ is called {\em periodic} if it has a 
nontrivial period, i.e. a period $t \ne 0$. 
A tiling $\T$ is called {\em 2-periodic} if $\T$ possesses two linear
independent periods. A tiling $\T$ is called {\em nonperiodic} if its 
only period is the trivial period $t=0$. 

A {\em substitution rule} $\sigma$ is a simple method to generate 
nonperiodic tilings. The basic idea is to substitute each prototile 
$T_i$ with a patch $\sigma(T_i)$ consisting of congruent copies of 
some of the prototiles $T_1, \ldots, T_m$. 
The Penrose tilings can be generated by  a substitution rule with
two prototiles, see \cite{BG}, \cite{FH} or \cite{WIK}. A simpler 
example using just one prototiles is shown
in Figure \ref{fig:subst-bsp}: a substitution rule for the {\em chair
tiling}. The chair tiling is nonperiodic, even though it contains large
2-periodic subsets \cite{BG}.

\begin{figure}
\includegraphics[width=100mm]{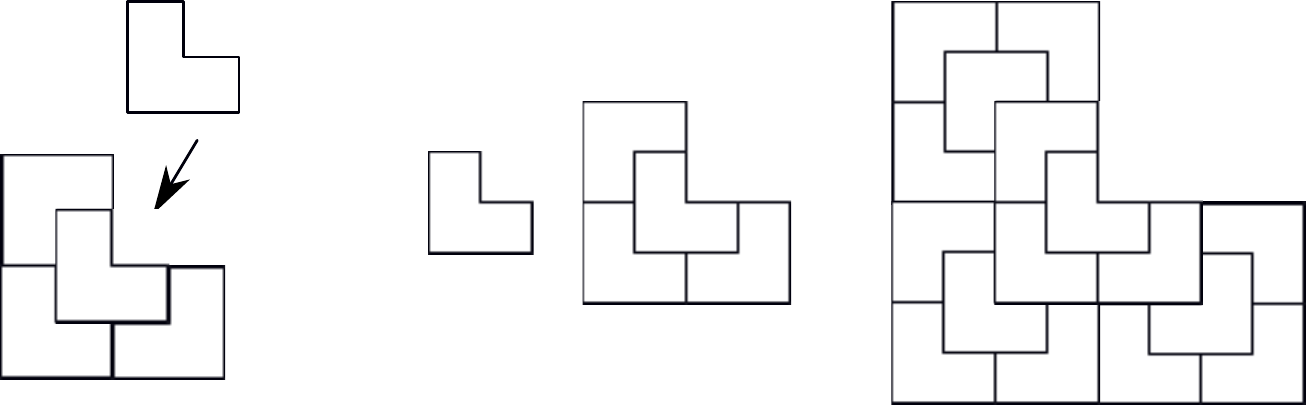}
\caption{The substitution rule for the aperiodic chair tiling (left) 
and the first two iterates of this rule applied to a single 
prototile. \label{fig:subst-bsp}}
\end{figure}

Given a substitution $\sigma$ with prototiles $T_1, \ldots, T_m$
a patch of the form $\sigma(T_i)$ is called {\em supertile}.
More generally, a  patch of the form $\sigma^k(T_i)$ is called 
{\em level $k$ supertile}.
A substitution rule is called {\em primitive} if there is $k \in \N$ 
such that each level $k$ supertile contains congruent copies of all prototiles.

In the sequel we present a construction method for nonperiodic tilings
that is similar but not equal to a substitution rule. 
Before we give a more precise description let us first illustrate the
idea of the construction. We start with a square $G_0$ of edge length two
centred in the origin, see Figure \ref{fig:gen0-3} left. (We may imagine 
this and further squares cut out of cardboard or similar.) In the next 
step we remove $G_0$ (but keep its position in mind); we take four 
translates of $G_0$, and we place the first one one unit to the left 
with respect to $G_0$. The next square we rotate by 
$\frac{-\pi}{2}$ and place it one unit up with respect to $G_0$,
shuffling it partly {\em below} the first square. The third square we 
rotate by ${-\pi}$, place it one unit to the right with respect to $G_0$ 
shuffling it partly below the second square. The last square is rotated by 
$\frac{- 3 \pi}{2}$, placed one unit down with respect to $G_0$ and shuffled 
below the third square. The result is shown in Figure \ref{fig:gen0-3} 
second from the left. Let us call this constellation of four overlapping
squares $G_1$. 

\begin{figure}
\includegraphics[width=110mm]{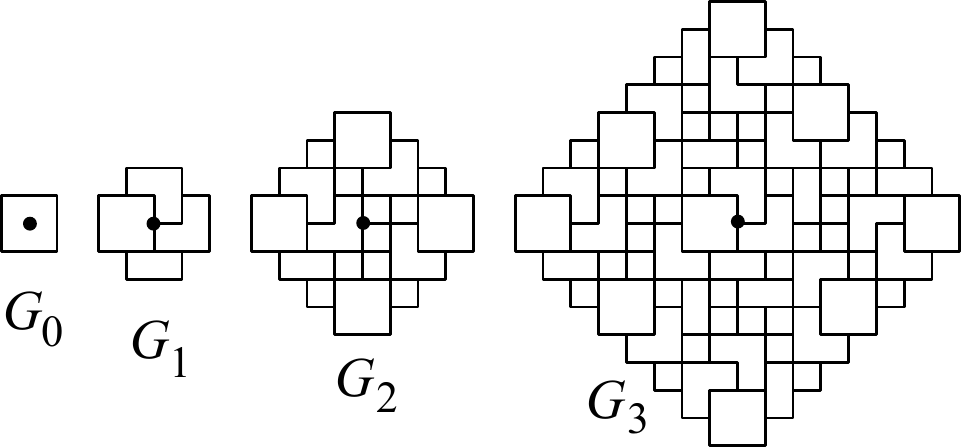}
\caption{The first three iterates of the construction for non-decorated
tiles. The black dot indicates the origin.   \label{fig:gen0-3}}
\end{figure}

We proceed in similar way: We take four translates of $G_1$ and put 
one translate two units to the left with respect to $G_1$, a second 
translate (rotated by $\frac{-\pi}{2}$) two units up with respect to 
$G_1$ and below the first translate, a third translate (rotated by 
$-\pi$) two units to the right with respect to $G_1$ and 
below the second translate, and a fourth translate (rotated by
$\frac{-3 \pi}{2}$) two units down with respect to $G_1$ and below 
the third translate. The result $G_2$, as seen from above,  
is shown in Figure \ref{fig:gen0-3} second from the right. 

The next generations are constructed analogously,
just replace $G_1$ by $G_n$ and 2 by $2^n$ in the last paragraph.
In this way we can cover arbitrary large parts of the plane. In order
to translate this covering into a tiling we consider only the visible (parts
of) squares. Thus the tiling has four prototiles: a large square
with edge length two, a small square with edge length one, a $1 \times 2$
rectangle, and a ``chair'', i.e. a non-convex hexagon that is the 
union of three small squares. The more subtle question how this 
sequence of finite patterns yields an infinite 
tiling of the plane is answered below. Briefly, we use the fact that the
central patches of the $G_i$ are fixed under the iteration. These patches
yield a nested sequence $S_2 \subset S_3 \subset S_4 \cdots$ of patches
with a well defined limit which is an infinite tiling. 

\section{The Tilings}

Now we give a more precise description of the construction of the 
sequence of patches.
Let $\phi$ denote a rotation through $-\pi/2$ about the origin.
Start with a square $Q:=[-1,1]^2$. In the following we need
to keep track about which squares lie ``above'' (parts of) other
squares. In order to achieve this we may equip each square with an 
additional label. One may write $(Q,i)$ and consider the label $i$ 
as the ``height'' of $Q$ in some orthogonal direction. Anyway, this idea
leads to a tedious description of the construction that we
omit here. For our purposes it is sufficient just to
keep track which squares are higher respectively lower than other squares.

Let $P_0:=\{ Q \}$. Let
\[ P_1: = \{ Q+(-1,0), \phi Q + (0,1), \phi^2 Q+(1,0), \phi^3 Q+(0,-1) \}, \]
where the first square is on top of the other squares, the second square 
is below the first one but above the third and the fourth square, 
and the fourth square is on bottom.
In the $n$th step let $P_n$ consist of four congruent copies of $P_{n-1}$:
\[ P_n : = \{ P_{n-1}+(-2^{n-1},0), \phi P_{n-1}+(0,2^{n-1}),
\phi^2 P_{n-1}+(2^{n-1},0),  \phi^3 P_{n-1}+(0,-2^{n-1}) \}. \]  
All squares in the first set are on top of the squares in the other three sets,
the squares in the second set all are below the squares in the first set
but on top of all squares in the third set and the fourth set, and the 
squares in the fourth set are all below the squares in the other three sets.
Two squares within one of the sets $P_{n-1}+t$ inherit their above-below
relation from the preceding steps in the iteration.

Note that the centres of the squares are contained in the lattice translate
\[  (1,0) + \Lambda := (1,0)+ \langle (1,1), (1,-1) \rangle_{\Z} = 
\{ (x_1, x_2) \in \R^2 \, | \, x_1+x_2 \mbox{ odd } \} \]
Hence, by the construction, the centre of each of the congruent copies
of $P_1$ (i.e. the single point that is the intersection of 
the four squares in the copy of $P_1$) is contained in $(2,0) + 2 \Lambda=
(2,0)+\langle (2,2), (2,-2) \rangle_{\Z}$.
Since each $x \in (1,1)+\Lambda$ has a unique presentation of the form 
\[ x=y + \left\{ \begin{array}{l} (0, \pm 1) \\ (\pm1, 0 ) \end{array}
\right. ,  \; (y \in (2,0)+2\Lambda) \] 
the following lemma is immediate.

\begin{lem} \label{lem:210}
The set $P_n$ consists of $4^n$ congruent copies of $Q$ with centres
\[ \{ (x_1,x_2) \in \Z^2 \, | \, x_1+x_2 \; \mbox{\rm odd }, \; |x_1|+|x_2| 
\le 2^n-1 \} \]
\end{lem}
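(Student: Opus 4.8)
The plan is to argue by induction on $n\ge 1$, taking $n=1$ (not $n=0$) as the base case: for $n=0$ the displayed set is empty whereas $P_0=\{Q\}$ has centre $(0,0)$, so the formula as written is an assertion about $n\ge 1$. Throughout I write $T_n:=\{(x_1,x_2)\in\Z^2 \mid x_1+x_2 \text{ odd},\ |x_1|+|x_2|\le 2^n-1\}$ and let $C_n$ be the set of centres of the squares of $P_n$; the $\ell^1$-norm is $\|x\|_1:=|x_1|+|x_2|$. For $n=1$, since $\phi$ fixes the origin, one reads off directly that $C_1=\{(-1,0),(0,1),(1,0),(0,-1)\}=T_1$, a set of $4=4^1$ points, so the base case is immediate.

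The first step is an invariance observation that lets me strip the rotations out of the recursion. Because $\phi(x_1,x_2)=(x_2,-x_1)$ preserves $\|\cdot\|_1$ and preserves the parity of $x_1+x_2$ (as $x_2-x_1$ and $x_1+x_2$ differ by an even number), $\phi$ maps $T_m$ bijectively onto itself, so $\phi^k T_m=T_m$ for every $k$. Since $\phi$ is a rotation about the origin, applying $\phi^k$ to a square moves its centre $c$ to $\phi^k c$; together with the translation this shows that the centre set of $\phi^k P_{n-1}+v_k$ is $\phi^k C_{n-1}+v_k$. Hence, writing $s:=2^{n-1}$ and $v_0=(-s,0),\,v_1=(0,s),\,v_2=(s,0),\,v_3=(0,-s)$, the induction hypothesis $C_{n-1}=T_{n-1}$ together with the invariance gives
\[ C_n=\bigcup_{k=0}^{3}\bigl(\phi^k T_{n-1}+v_k\bigr)=\bigcup_{k=0}^{3}\bigl(T_{n-1}+v_k\bigr). \]

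The combinatorial heart is then to show that the four translated small diamonds $A_k:=T_{n-1}+v_k$ (each of radius $2^{n-1}-1$, centred at $v_k$) partition the large diamond $T_n$ (radius $2^n-1=2s-1$). Every point of $\bigcup_k A_k$ has odd coordinate sum because each $v_k$ has even coordinate sum. I would exploit that $\phi$ permutes the $A_k$ cyclically (one checks $\phi v_k=v_{k+1}$) while fixing $T_n$, so it suffices to treat one representative and transport the rest by $\phi$. The inclusion $A_k\subseteq T_n$ then needs only one triangle-inequality estimate: for $x\in T_{n-1}$, $\|x+v_2\|_1=|x_1+s|+|x_2|\le s+\|x\|_1\le s+(2^{n-1}-1)=2^n-1$. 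For the reverse inclusion I would split $T_n$ into the four open sectors $\{z_1>|z_2|\}$, $\{z_2>|z_1|\}$, $\{z_1<-|z_2|\}$, $\{z_2<-|z_1|\}$ and verify that each sector lands in the matching $A_k$ (e.g.\ a point with $z_1>|z_2|$ satisfies $z_1\ge|z_2|+1$, whence $\|z-v_2\|_1\le s-1$ in both cases $z_1\ge s$ and $z_1<s$).

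The step I expect to be the main obstacle is precisely this exact-partition claim — showing there are neither gaps nor overlaps where the four small diamonds meet — and the crucial point is that the odd-sum hypothesis forbids $|z_1|=|z_2|$, so the four sectors genuinely cover all odd-sum points with no overlap and no seam on the diagonals. This is the same fact underlying the unique-representation remark preceding the lemma. As a bookkeeping alternative to the sector argument, I could instead combine the already-proved inclusion $\bigcup_k A_k\subseteq T_n$ with pairwise disjointness (two small diamonds of radius $s-1$ cannot meet, as their centres are at $\ell^1$-distance $2s>2(s-1)$) and the count $|T_n|=4^n$, which follows from $|\{\|x\|_1=j\}|=4j$ and $1+3+\cdots+(2^n-1)=(2^{n-1})^2=4^{n-1}$. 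Either route yields $C_n=T_n$ with all $4^n$ centres distinct, closing the induction.
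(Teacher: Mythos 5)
Your proof is correct: the base case, the reduction $C_n=\bigcup_k(T_{n-1}+v_k)$ via $\phi$-invariance of the diamonds, the inclusion estimate, and both closing routes (the parity/sector partition, and disjointness plus the count $|T_n|=4^n$) all check out, and your remark that the statement as literally written fails at $n=0$ is a fair catch. However, your argument is organised differently from the paper's. The paper treats the lemma as immediate from a unique-representation fact at the \emph{finest} scale: by construction the centres of the copies of $P_1$ inside $P_n$ (the points where four squares meet) lie in $(2,0)+2\Lambda$, and every point of the odd lattice $(1,0)+\Lambda$ (the paper's ``$(1,1)+\Lambda$'' is evidently a typo for this) has a unique representation $y+e$ with $y\in(2,0)+2\Lambda$ and $e\in\{(\pm1,0),(0,\pm1)\}$; since the square centres are exactly the points $y+e$ with $y$ a $P_1$-centre, distinctness and the count follow at once. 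In other words, the paper decomposes the diamond into the four-point ``crosses'' surrounding the $4^{n-1}$ centres of $P_1$-copies, whereas you decompose it into the four rotated-and-translated copies of the diamond of $P_{n-1}$, i.e.\ at the \emph{top} level of the recursion. The paper's route buys brevity: once the lattice observation is made, no gap/overlap analysis is needed because the representation of each odd point is unique --- though it leaves implicit the inductive fact that the $P_1$-centres are exactly the points of $(2,0)+2\Lambda$ in the appropriate diamond. Your route is longer but fully self-contained: the $\ell^1$ estimates and the parity observation (an odd coordinate sum forbids $|z_1|=|z_2|$, so the four sectors have no seam and no overlap) make explicit precisely the no-gaps/no-overlaps issue that the paper's ``immediate'' conceals.
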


Because of Lemma \ref{lem:210} almost any $(x_1,x_2) \in \R^2$ 
(i.e.\ any with $x_1,x_2 \notin \Z$) is covered by exactly two 
squares in $P_n$ (for $n$ large enough). Any $(x_1,x_2)$ with exactly one 
integer coordinate is covered by three squares in $P_n$ (the interior of 
one square and some common edge of two further squares; again if $n$ is 
large enough),  and any $(x_1,x_2)$ with
two integer coordinates is covered by four squares (midpoints of edges) 
if $x_1+x_2$ is even, and by five squares (the centre of
one square and the common vertex of four further squares) if $x_1+x_2$
is odd. This yields the following result. 

\begin{lem} \label{lem:covdeg2}
For all $n \in \N$ the covering degree of $P_n$ is two in 
$\{ (x_1,x_2) \in \R^2 \; | \; |x_1+x_2| \le 2^n-1 \}$. 
\end{lem}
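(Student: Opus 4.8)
The plan is to read the covering degree straight off Lemma~\ref{lem:210}, turning the question into a count of lattice points. First I would discard the non-generic points: the covering degree is constant on each open unit cell cut out by the grid $\{x_1\in\Z\}\cup\{x_2\in\Z\}$, and the degrees on the grid lines themselves (three, four or five) are already tabulated in the paragraph preceding the statement, so since those lines have measure zero it is enough to determine the degree at a point $x=(x_1,x_2)$ with $x_1,x_2\notin\Z$. Because $Q+c=[c_1-1,c_1+1]\times[c_2-1,c_2+1]$ contains $x$ exactly when $|x_1-c_1|\le 1$ and $|x_2-c_2|\le 1$, and because $x_1,x_2\notin\Z$ while $c\in\Z^2$, the only possible centres are the four corners $(a,b),(a+1,b),(a,b+1),(a+1,b+1)$ of the cell containing $x$, where $a=\lfloor x_1\rfloor$ and $b=\lfloor x_2\rfloor$.

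Next I would count. Exactly two of these four corners have odd coordinate sum, so by Lemma~\ref{lem:210} the covering degree lies in $\{0,1,2\}$, and it equals $2$ precisely when both odd-sum corners $c$ meet the support bound $|c_1|+|c_2|\le 2^n-1$. The two odd-sum corners are $\{(a,b),(a+1,b+1)\}$ when $a+b$ is odd and $\{(a+1,b),(a,b+1)\}$ when $a+b$ is even; thus they are displaced from one another by $(1,1)$ in the first case and by $(1,-1)$ in the second, and everything comes down to keeping the farther of the two inside the support.

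The core computation I would carry out for points where $x_1$ and $x_2$ have the same sign, which is exactly where $|x_1+x_2|=|x_1|+|x_2|$. Using the coordinate swap $(x_1,x_2)\mapsto(x_2,x_1)$ and the point reflection $x\mapsto -x$ — both of which preserve the centre set and the strip — I would reduce to $x_1,x_2\ge 0$. There $|c_1|+|c_2|$ is simply the coordinate sum of $c$, the hypothesis reads $x_1+x_2\le 2^n-1$, and $a+b<x_1+x_2$ forces $a+b\le 2^n-2$. The larger of the two odd corners' $\ell^1$-norms is $a+b+2$ when $a+b$ is odd and $a+b+1$ when $a+b$ is even, and the oddness of $2^n-1$ now closes the gap: if $a+b$ is even then $a+b+1\le 2^n-1$ at once, while if $a+b$ is odd then $a+b\le 2^n-2$ together with the parity of $a+b$ gives $a+b\le 2^n-3$, so $a+b+2\le 2^n-1$. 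In both cases both odd-sum corners lie in the support and the degree is $2$.

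The main obstacle is the complementary regime, where $x_1$ and $x_2$ have opposite signs, i.e.\ the anti-diagonal arms of the strip. There $|x_1+x_2|$ records only the difference of the two coordinates' magnitudes, not their sum, so it no longer bounds the $\ell^1$-norm $|c_1|+|c_2|$ of the cell; the hypothesis $|x_1+x_2|\le 2^n-1$ then fails to pin the farther odd-sum corner — the one displaced by $(1,1)$ — inside the support. Controlling that corner is the genuinely delicate point, and it is exactly here that one must reconcile the stated diagonal region with the sharper bound $|c_1|+|c_2|\le 2^n-1$ on the centres that Lemma~\ref{lem:210} actually provides. I would postpone this anti-diagonal regime to the end and expect it to be where the real work lies.
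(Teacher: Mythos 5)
Your core computation is sound, and it is essentially the argument the paper leaves implicit (the paragraph preceding the lemma just counts, for a generic point, the two odd-sum corners of its unit cell as the only possible centres of covering squares). But your proposal has a genuine hole: the mixed-sign regime is announced, postponed, and never carried out --- and under the literal reading of the statement it \emph{cannot} be carried out, because the lemma as printed is false there. For $n=1$ the strip $\{|x_1+x_2|\le 1\}$ contains the point $(5.5,-5.3)$, yet no square of $P_1$ (centres $(\pm 1,0)$, $(0,\pm 1)$) covers it, so the covering degree there is $0$, not $2$. The tension you correctly sensed between the stated region and the centre bound $|c_1|+|c_2|\le 2^n-1$ from Lemma~\ref{lem:210} is not a delicate point requiring ``real work''; it is a notational slip: the paper uses $|x_1+x_2|$ throughout as shorthand for the $\ell^1$-norm $|x_1|+|x_2|$. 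This is confirmed elsewhere in the text: the support of the \emph{finite} patch $S_n$ is described as $\{(x_1,x_2)\,|\,|x_1+x_2|<2^{n-1}-1\}$, which must be a bounded diamond rather than an infinite strip, and the radius $\frac{1}{\sqrt{2}}(2^{n-1}-1)$ invoked in the proof of Theorem~\ref{thm:lim-rn} is the inradius of that diamond.

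Once the region is read as the diamond $\{|x_1|+|x_2|\le 2^n-1\}$, the case you postponed evaporates: the reflection $(x_1,x_2)\mapsto(x_1,-x_2)$ preserves the diamond and also preserves the centre set of Lemma~\ref{lem:210}, since $c_1-c_2\equiv c_1+c_2 \pmod 2$, so it maps the mixed-sign regime onto the same-sign regime you already handled. (You excluded exactly this reflection because it fails to preserve the strip --- further evidence that the strip is the wrong reading.) With that one line added, your parity bookkeeping --- $a+b\le 2^n-2$, hence $a+b+1\le 2^n-1$ in the even case and, since $2^n-2$ is even, $a+b+2\le 2^n-1$ in the odd case --- yields a complete proof, and one that is a careful, explicit version of the counting argument the paper itself offers in lieu of a formal proof.
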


Since we may decide to equip the squares used above with 
some additional decoration
there are several ways in which this construction yields tilings of
$\R^2$. In the sequel we will mainly consider one particular tiling:
the {\em arrowed tiling} $\A$ that is obtained by decorating the 
underlying square $R_0$ with arrows and colours as shown in Figure 
\ref{fig:arrow} (left).

\begin{figure} 
\includegraphics[width=110mm]{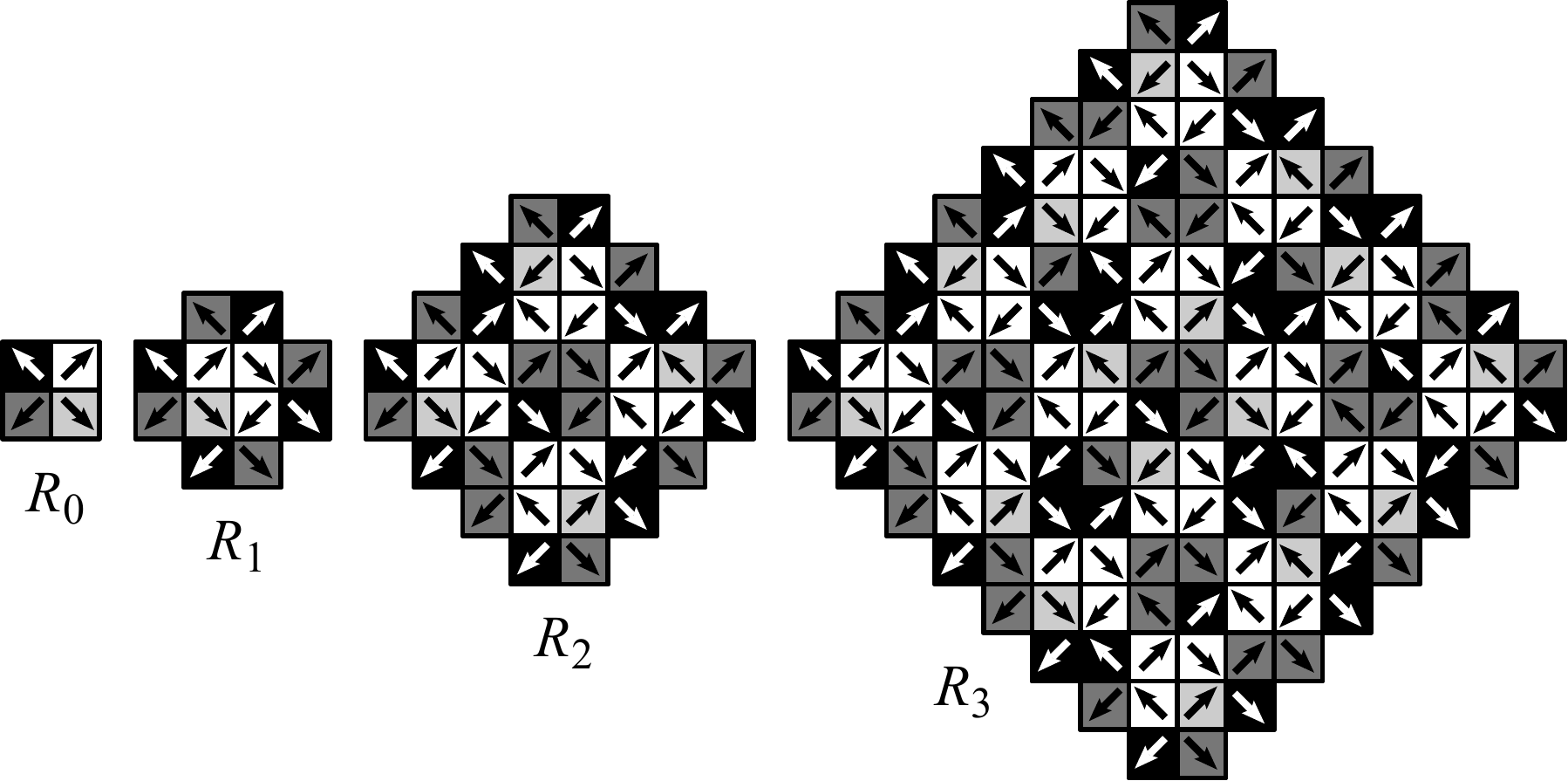}
\caption{The construction of Figure \ref{fig:gen0-3} applied to
squares with certain decorations. A single square $R_0$ is shown
on the left: the decoration divides $R_0$ into four unit squares of
distinct colours, where each unit square carries an arrow in addition. 
These decorated unit squares yield the tiles of the tiling $\A$.
\label{fig:arrow}}
\end{figure}

The tiling $\A$ is obtained by considering what we see viewing $P_n$ 
from ``above'', where $P_n$ now consists of congruent copies of the 
decorated square $R_0$. Formally,
each unit square $K=[k,k+1]\times [m,m+1]$ ($k,m \in \Z$) inherits
the decoration (colour and arrow) from the top square of the two 
large squares in $P_n$ that contain $K$. We want to consider 
$\A$ as a tiling by unit squares with decoration as prototiles 
where the unit squares carry an
arrow as well as a colour (here: black, dark grey, light grey, white).
From now on the term ``tile'' always denotes such a unit square
with decoration, if not mentioned otherwise.

Regarding the point how to obtain a tiling of the entire plane from
the finite patches $P_n$, a very natural approach ---and a  very useful 
one---is considering the limit $\lim\limits_{n \to \infty} P_n$, where the 
limit is taken with respect to the local topology \cite{BG}. 
In particular, in this topology
two tilings are $\eps$-close if they agree in a ball of radius
$1/\eps$ centred in the origin. For our purposes we may define the
distance $d(\T,\T')$ between two tilings $\T, \T'$ by 
\[ \widetilde{d}(\T, \T') = \inf \{ \eps > 0 \; | \; \exists x,y \in B_{\eps}:
\, B_{1/\eps} \cap (\T+x) = B_{1/\eps} \cap (\T'+y) \} \] 
where $B_r$ denotes the open ball of radius $r$ centred in 0.
In order to make $\widetilde{d}$ into a metric, i.e. in order to ensure 
transitivity, we define 
\[ d(\T, \T') = \min \{  \widetilde{d}(\T, \T') , \frac{1}{\sqrt{2}} \}, \]
compare \cite{LMS1}. We will see in the sequel that the central parts $S_n$ 
of $P_n$ for $n \ge 2$ form a nested sequence 
\[ S_2 \subset S_3 \subset \cdots \subset S_n \subset \cdots, \]
where $S_n$ has support $\{ (x_1,x_2) \; | \; |x_1+x_2| < 2^{n-1}-1 \}$,
hence the sequence $P_n$---or any sequence of tilings $(\T_n)_n$ where
$\T_n$ contains $P_n$ as its central patch---converges with respect to 
the metric $d$. 

For the next steps it will be convenient to consider the arrowed tiling
$\A$. In order to avoid confusion let us call the corresponding 
sequence $R_n$, i.e. the sequence $P_n$ where each tile is equipped
with the arrow decoration of Figure \ref{fig:arrow}. 
The first observation is that the arrows on tiles at the boundary 
of $R_n$ point outwards. In order to make the term ``boundary'' precise, 
let $\bd(R_n)$ be the set of the tiles in $R_n$ that have edges
on the boundary of $\bigcup_{T \in R_n} T$. These are exactly the 
tiles that have two edges that are not shared with other tiles 
of $R_n$ ; or equivalently: these are exactly 
the tiles with a vertex that is not vertex of another tile of $R_n$.

\begin{lem} \label{lem:pfeilaussen}
The arrows on all tiles at the boundary $\bd(R_n)$ of $R_n$
point outwards, i.e. in the direction of their vertex that is not a vertex 
of any other tile of $R_n$.
\end{lem}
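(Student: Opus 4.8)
The plan is to reduce the statement to two facts: a rotation-invariant property of the arrow decoration, and a geometric identification of the boundary tiles as those unit squares covered by a single large square. First I would record the invariant. Call a congruent copy $R$ of the decorated square $R_0$ \emph{corner-arrowed} if each of its four unit-square quadrants carries an arrow pointing to the corner of $R$ lying in that quadrant. By Figure \ref{fig:arrow}, $R_0$ is corner-arrowed. Since $\phi$ maps quadrants to quadrants and the corners of a square to the corners of its image (rotating the arrows accordingly), and since translations preserve directions, being corner-arrowed is preserved under every map $R \mapsto \phi R + t$. By the recursive definition of $P_n$, hence of $R_n$, every large decorated square occurring in $R_n$ has the form $\phi^k R_0 + t$ with $k \in \{0,1,2,3\}$; therefore every large square in $R_n$ is corner-arrowed. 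In particular, whatever its rotational state, the arrow in each of its quadrants points to the corner of that square in the same quadrant.

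Next I would identify the boundary tiles. Let $T = [k,k+1]\times[m,m+1]$ be a tile of $R_n$. By Lemma \ref{lem:210} the large squares containing $T$ have centres among $(k,m), (k+1,m), (k,m+1), (k+1,m+1)$, of which only the diagonal pair with odd coordinate sum can actually occur in $R_n$. If both of these occur, one checks that every vertex of $T$ is a centre or an edge-midpoint of a large square whose remaining quadrants are present, hence is shared, so $T$ is interior. If exactly one occurs, say the square $R$ with centre $c$, then $c$ and the two edge-midpoints among the vertices of $T$ are likewise shared, so the only vertex of $T$ that can fail to be shared is the outer corner of $R$ in the quadrant occupied by $T$. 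Consequently, whenever $T \in \bd(R_n)$, the square covering $T$ is unique and the free vertex of $T$ is precisely this outer corner.

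Combining the two facts finishes the argument. A boundary tile $T$ is covered by a single large square $R$, so $R$ is trivially the top square over $T$ and the visible arrow on $T$ is the one inherited from $R$. By the corner-arrowed property this arrow points to the corner of $R$ in the quadrant occupied by $T$, which by the identification above is exactly the free vertex of $T$; thus it points outwards, as claimed.

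I expect the main obstacle to be the geometric identification in the second paragraph: verifying cleanly, from the admissible-centre description of Lemma \ref{lem:210} (and the covering degree in Lemma \ref{lem:covdeg2}), that a doubly covered tile has all vertices shared and that in a singly covered tile the only possibly free vertex is the outer corner of its covering square. Once this quadrant bookkeeping is in place, the invariance of the decoration and the top-square argument are routine.
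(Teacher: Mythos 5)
Your proposal is correct, but it takes a genuinely different route from the paper's proof. The paper argues by induction on $n$: the claim is checked for $R_0$ and $R_1$, and the inductive step rests on the observation that the boundary of $R_n$ is made up of (pieces of) the boundaries of the four rotated copies of $R_{n-1}$, so outward-pointing arrows remain outward-pointing under the rotations $\phi^k$. You instead give a direct, non-inductive argument: you combine the rotation-invariant ``corner-arrowed'' property of every large square (each being a rigid copy $\phi^k R_0 + t$ of the decorated square, with no reflections involved) with a lattice-geometric identification, via Lemma \ref{lem:210}, of the boundary tiles as exactly the singly-covered quadrants, whose unique free vertex is the outer corner of the covering square. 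The paper's induction buys brevity and fits the self-similar viewpoint used throughout; your version buys rigour and self-containedness, and in particular it makes explicit a point the paper's two-line sketch leaves implicit: a boundary tile of $R_n$ is covered by only one large square, so its decoration cannot be overwritten by a square lying on top of it coming from a neighbouring copy of $R_{n-1}$, and the inherited arrow is unambiguous. Your intermediate statement (boundary tile $\Rightarrow$ singly covered, with free vertex equal to the outer corner of the covering square) is slightly stronger than the lemma itself, and it is essentially the same observation the paper exploits much later to show that the naked tiling ${\mathcal N}$ is locally derivable from $\A$ (arrows point away from the centres of the large squares, hence determine their edges locally).
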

\begin{proof}
The claim is true for $R_0$ and $R_1$, hence it follows inductively 
for all $R_n$ by considering the construction: The boundary of $R_n$ 
consists of the boundaries of $R_{n-1}$. 
\end{proof}

\begin{lem} \label{lem:pfeildiag}
All arrows on the tiles in $R_n$ on the two diagonals $(x_1,x_2)$ with 
$x_1=x_2$ or $x_1=-x_2$ respectively show in the following directions:
$(1,-1)$ for $x_1=x_2$, $(1,1)$ for $x_1=-x_2<0$, $(-1,-1)$ for $x_1=-x_2>0$. 
\end{lem}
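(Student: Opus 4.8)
The plan is to reduce the statement, in a single step of the construction, to Lemma~\ref{lem:pfeilaussen} applied to $R_{n-1}$, rather than to run a fresh induction on the diagonal claim itself. The two ingredients are the recursive description
\[
R_n=\{\,R_{n-1}+(-2^{n-1},0),\ \phi R_{n-1}+(0,2^{n-1}),\ \phi^2 R_{n-1}+(2^{n-1},0),\ \phi^3 R_{n-1}+(0,-2^{n-1})\,\}
\]
together with the stacking order (the left copy on top, then the up copy, then the right copy, with the down copy at the bottom). As a confidence check one can verify $R_1$ by hand: there the two diagonals meet only the four central tiles, and the top square over each of them produces exactly the three stated directions; the general reduction below in fact already covers $n=1$, since Lemma~\ref{lem:pfeilaussen} holds down to $R_0$.

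First I would locate the diagonals geometrically. The four copies are centred at $(-2^{n-1},0),(0,2^{n-1}),(2^{n-1},0),(0,-2^{n-1})$, and reflection in $x_1=x_2$ interchanges the up and right centres (and the left and down centres), while reflection in $x_1=-x_2$ interchanges the up and left centres (and the right and down centres). Hence each diagonal ray is exactly the overlap seam of two adjacent copies. The stacking order then singles out which copy is visible along each seam: the up copy on $x_1=x_2>0$, the right copy on $x_1=-x_2>0$, and the left copy on both $x_1=-x_2<0$ and $x_1=x_2<0$ (the down copy, lying at the bottom, is never visible along a seam).

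Next I would pass to the frame of the visible copy. Writing the visible arrow at a diagonal point $p$ as $\phi^k$ applied to the arrow of $R_{n-1}$ at $q=\phi^{-k}(p-t)$, where $\phi^k$ and $t$ are the rotation and translation of that copy, a short computation shows that $q$ traces a full outer edge of $R_{n-1}$: for the up copy one finds $q=(2^{n-1}-s,s)$, i.e.\ $q_1+q_2=2^{n-1}$ (the upper-right edge), and the other three seams land on $q_1+q_2=2^{n-1}$, $q_1-q_2=2^{n-1}$, and so on. By Lemma~\ref{lem:pfeilaussen} the tiles along such an edge carry outward arrows, pointing in direction $(1,1)$ on the upper-right edge and $(1,-1)$ on the lower-right edge. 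Applying the rotation of the visible copy then yields $\phi(1,1)=(1,-1)$ on $x_1=x_2>0$, the unrotated $(1,1)$ on $x_1=-x_2<0$, $\phi^2(1,1)=(-1,-1)$ on $x_1=-x_2>0$, and the unrotated $(1,-1)$ on $x_1=x_2<0$ --- precisely the three asserted directions. (Note that the two rays of the line $x_1=x_2$ come from different copies and different edges, yet both give $(1,-1)$; this is what makes that line carry a single direction.)

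The step I expect to be the main obstacle is the geometric bookkeeping rather than the arrow algebra: one must check that each diagonal ray really is the seam of exactly the two named copies, and that under the visible copy's chart it corresponds to a complete outer edge of $R_{n-1}$ made up of boundary tiles whose free vertex points in the matching diagonal direction. This rests on the self-similar, staircase shape of $\bd(R_{n-1})$ already exploited in the proof of Lemma~\ref{lem:pfeilaussen}, together with the covering-degree statement of Lemma~\ref{lem:covdeg2}; once this is pinned down, the orientations drop out of Lemma~\ref{lem:pfeilaussen} and the four powers of $\phi$.
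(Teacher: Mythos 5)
Your proposal is correct and follows essentially the same route as the paper: the paper's own proof of Lemma~\ref{lem:pfeildiag} simply states that the claim is an immediate consequence of Lemma~\ref{lem:pfeilaussen} and the construction (illustrated by Figure~\ref{fig:rn-rek}), which is exactly the reduction you carry out. Your write-up merely supplies the seam/stacking bookkeeping and the rotation algebra that the paper leaves implicit.
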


\begin{figure} 
\includegraphics[width=100mm]{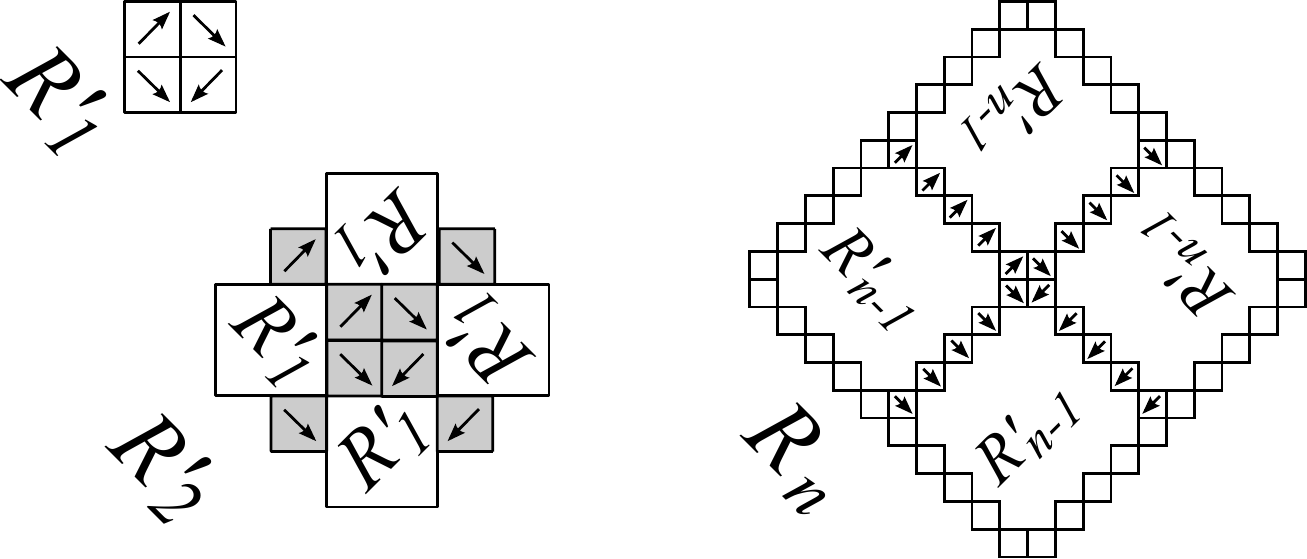}
\caption{The structure of the patches $R'_n$. Each $R'_n$
is build from four congruent copies of $R'_{n-1}$ and some further
tiles on its main diagonals. The arrows in the tiles on the diagonals 
are pointing in the directions given in Lemma \ref{lem:pfeildiag}.
\label{fig:rn-rek}}
\end{figure}

This is an immediate consequence of Lemma \ref{lem:pfeilaussen} and
the construction. Figure \ref{fig:rn-rek} (right) illustrates the situation.

\begin{thm} \label{thm:lim-rn}
The sequence $R_n$ is convergent with respect to the metric $d$. Consequently,
any sequence of tilings $(\T_n)_n$ where $\T_n$ contains $R_n$ as its
central patch is convergent with respect to the metric $d$. 
\end{thm}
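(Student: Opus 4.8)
The plan is to show that $(R_n)_n$ is a Cauchy sequence and then to invoke completeness of the space of tilings of $\R^2$ under the metric $d$ (this is the standard local-topology metric, see \cite{BG, LMS1}); a Cauchy sequence then has a limit, which is the desired tiling. Since $d$ measures agreement on central balls, it suffices to show that the central patches of the $R_n$ stabilise: for every $r>0$ there is an $N$ such that $R_m$ and $R_n$ carry the same decorated tiles inside $B_r(0)$ whenever $m,n \ge N$.

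First I would make precise the nested central patches announced in the text. Set $D_n := \{(x_1,x_2) \mid |x_1+x_2| < 2^{n-1}-1\}$ and let $S_n$ be the subpatch of $R_n$ consisting of those tiles whose support lies in $D_n$. By Lemma \ref{lem:210} the underlying square positions already match: in the region $|x_1|+|x_2| \le 2^n-1$ both $R_n$ and $R_{n+1}$ place squares at exactly the odd lattice points, so no square is gained or lost centrally when a level is added. By Lemma \ref{lem:covdeg2} every point of $D_n$ with non-integer coordinates is covered by exactly two squares, so the visible tile there is the decoration of the upper of these two.

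The heart of the argument --- and the step I expect to be the main obstacle --- is to verify that within $D_n$ the two covering squares have the same orientation and the same relative height in $R_n$ as in $R_{n+1}$, so that the visible decoration is unchanged; equivalently, that $S_n \subseteq S_{n+1}$ and that $R_{n+1}$ contributes no further tiles inside $D_n$. I would prove this by induction on $n$, unfolding the recursion $R_{n+1} = \{R_n+(-2^n,0),\ \phi R_n+(0,2^n),\ \phi^2 R_n+(2^n,0),\ \phi^3 R_n+(0,-2^n)\}$ and tracking, for each of the four translated copies, which part reaches into $D_n$ and with what orientation and height. The anti-diagonal $x_1+x_2=0$ is exactly the seam along which the topmost copy meets the others, so the arrow bookkeeping of Lemmas \ref{lem:pfeilaussen} and \ref{lem:pfeildiag} is what identifies the visible tile there and shows the two descriptions coincide; the width bound $2^{n-1}-1$ is precisely the margin within which the contributions of the outer (lower) copies remain hidden beneath the inner (upper) ones. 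The delicate point is the height ordering, since Lemma \ref{lem:210} fixes only positions and not which square is seen.

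Once the nesting is established, the metric estimate is routine. The strip $D_n$ contains the ball $B_{r_n}(0)$ with $r_n = (2^{n-1}-1)/\sqrt2$, since $|x_1+x_2| \le \sqrt2\,|x|$, and $r_n \to \infty$. Agreement of $R_m$ and $R_n$ on $D_n \supseteq B_{r_n}(0)$ for all $m \ge n$ gives $d(R_m,R_n) \le 1/r_n$, so $(R_n)_n$ is Cauchy and hence convergent. For the final assertion, any tiling $\T_n$ containing $R_n$ as its central patch agrees with $R_n$ throughout $B_{r_n}(0)$, whence $d(\T_n, R_n) \le 1/r_n \to 0$; thus $(\T_n)_n$ is Cauchy with the same limit as $(R_n)_n$.
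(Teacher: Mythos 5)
Your overall skeleton matches the paper's: show that the central patches of the $R_n$ stabilise, then convert agreement on growing central balls into a Cauchy/convergence estimate. Your metric endgame is correct and is essentially verbatim what the paper does (it obtains $d(R_{n+1},R_k)\le \sqrt2/(2^{n-1}-1)$ for $k\ge n+1$, which is your $1/r_n$ up to the index shift), and the final assertion about sequences $(\T_n)_n$ containing $R_n$ as central patch is handled the same way.

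The gap is in the heart of the argument, which you yourself flag as ``the main obstacle'' and then leave as a plan; moreover, the induction you sketch is not set up so that it can close. Your inductive hypothesis is agreement of $R_{n-1}$ and $R_n$ on the central strip $D_{n-1}$. But when you unfold $R_{n+1}=\{R_n+(-2^n,0),\,\phi R_n+(0,2^n),\,\phi^2R_n+(2^n,0),\,\phi^3R_n+(0,-2^n)\}$, the portion of each copy of $R_n$ that lands near the origin of $R_{n+1}$ is not the central strip of that copy --- it is a region near one of its corners. So knowing the centre of $R_n$ tells you nothing directly about the centre of $R_{n+1}$; what you need is control of the decoration and height ordering near the corners, which is exactly the ``delicate point'' you name but do not resolve. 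The paper's device for resolving it is to pass to the boundary-stripped patches $R'_{n-1}=R_{n-1}\setminus\bd(R_{n-1})$: stripping the boundary removes precisely the tiles whose visibility can be altered by overlapping copies (the paper explicitly warns that neither the $R_n$ nor the $R'_n$ are nested, so some such device is unavoidable). It then unfolds \emph{two} levels of the recursion to show that the central patch of $R_{n+1}$ consists of four congruent copies of $R'_{n-1}$ plus tiles on the diagonals, that the central patch of $R_{n+2}$ is assembled from copies of $R'_{n-1}$ in the same manner, and that the seam tiles match by Lemma \ref{lem:pfeildiag}; note the seams are \emph{both} diagonals $x_1=\pm x_2$, not only the anti-diagonal as you state. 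A structural statement of this kind --- about the corner quadrants of the full patches, not about their central strips --- is what your induction hypothesis would have to carry; without it, your argument does not go through as proposed.
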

\begin{proof} 
The idea is to consider three steps of the iteration,
i.e. how $R_{n+2}$ is build up from congruent copies of $R_{n-1}$. This
shows that the central patch of $R_{n+1}$ reappears as
the central patch of $R_{n+2}$, compare Figure \ref{fig:rn-3rek}.

For $n \ge 1$ the set $R'_n:=R_n \setminus \bd (R_n)$ is nonempty
and reappears in $R_{n+1}$, see Figure \ref{fig:rn-rek}. Considering how 
$R'_n$ is contained in $R_{n+1}$ we find that the central patch $S$ of
$R_{n+1}$ consists of four congruent copies of $R'_{n-1}$. (This is indicated in the
left part of Figure \ref{fig:rn-3rek}, the area shaded in darker grey.)
The central patch of $R_{n+2}$, indicated in Figure \ref{fig:rn-3rek} by
shading in lighter grey, is a translate of the central patch $S$ of 
$R'_{n+1}$, since (a) it is build 
up from congruent copies of $R'_{n-1}$ in the same manner (this yields
the bulky part), and (b) the arrows on the diagonal boundaries of the
$R'_{n-1}$ coincide by Lemma \ref{lem:pfeildiag} together with the 
construction (this yields the 
``skeleton'' part). The directions of the arrows on the tiles on
the boundaries of the $R'_{n-1}$ are indicated by small black squares 
in the corresponding corners of the tiles in Figure \ref{fig:rn-3rek}.

Altogether this yields that the central patch of $R_{n+1}$ 
equals the central patch of $R_{n+2}$. Let us denote the central 
patch of $R_{n+1}$ consisting of four congruent copies of $R'_{n-1}$ 
(plus the tiles on the two diagonals) by $S_{n+1}$. More precisely, let
\[ S_{n+1}:=R_{n+1} \cap \{ (x_1,x_2) \; | \; |x_1+x_2| \le 2^{n-1}-1 \} 
=R_{n+2} \cap \{ (x_1,x_2) \; | \; |x_1+x_2| \le 2^{n-1}-1 \}. \]
The patches $S_{n+1}$ yield a nested sequence 
\[ S_{2} \subset S_{3} \subset S_{3} \subset \cdots \subset  S_{n} 
\subset \cdots. \] 
In particular we obtain that for any $n \in \N$ we have that 
$R_{n+1}$ coincides with all $R_k$ ($k \ge n+1$) in a ball $B_{1/\eps}$
for $\frac{1}{\eps} \le \frac{1}{\sqrt{2}} (2^{n-1}-1)$, i.e. $\eps \ge
\frac{\sqrt{2}}{2^{n-1}-1}$. Thus 
\[ d(R_{n+1},R_k) \le \frac{\sqrt{2}}{2^{n-1}-1} \quad \mbox{for }
k \ge n+1. \] 
Hence the sequence $R_n$, respectively any sequence 
of tilings $(\T_n)_n$ where $\T_n$ contains $R_n$ as its central patch, 
converges for $n \to \infty$ with respect to the metric $d$.
\end{proof}

\begin{figure} 
\includegraphics[width=120mm]{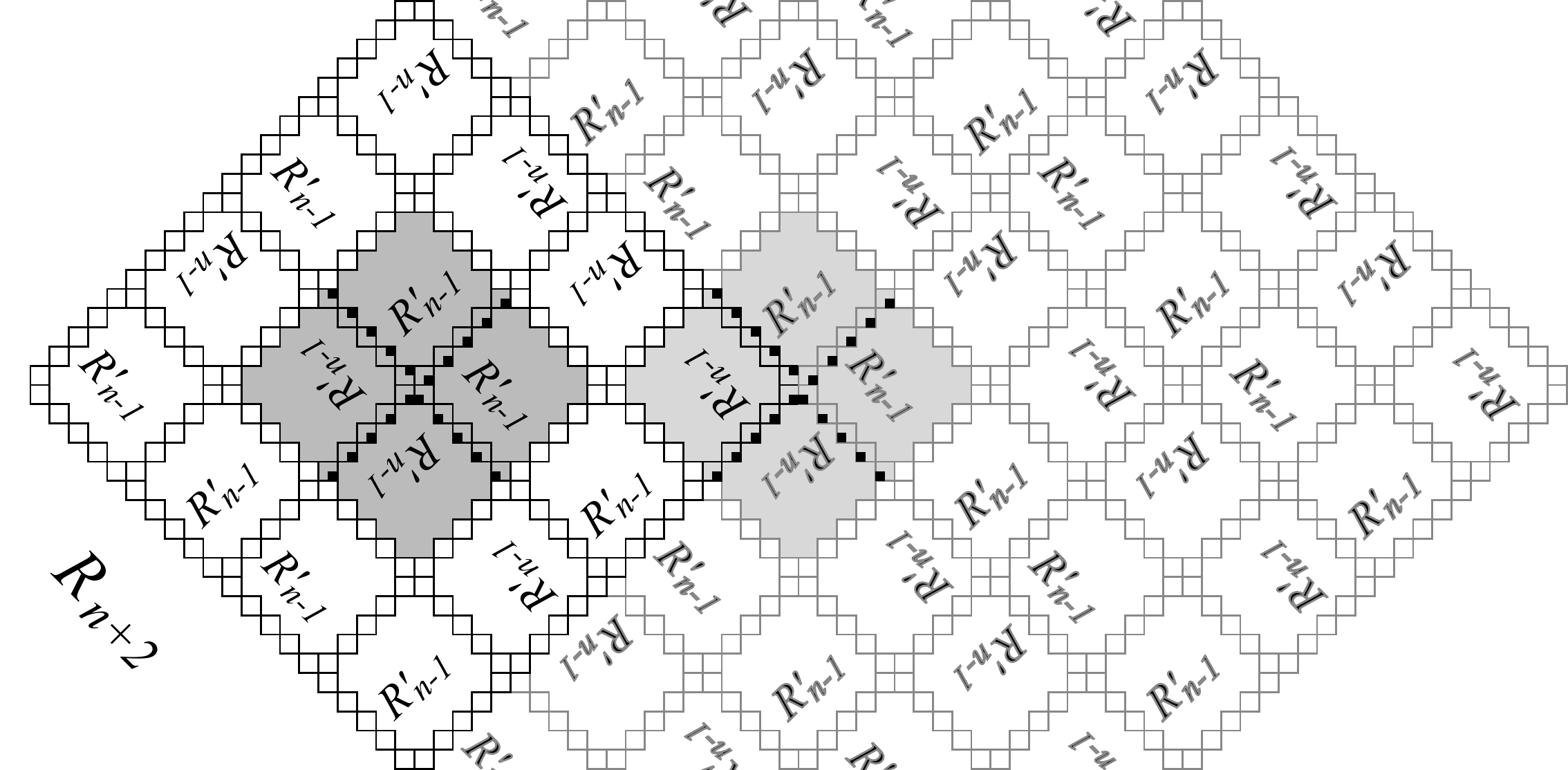}
\caption{How congruent copies of $R'_{n-1}$ are located within $R_{n+1}$ (left
part) and $R_{n+2}$ (everything).  \label{fig:rn-3rek}}
\end{figure}

Note that neither the sets $R_n$ nor the sets $R'_n$ form a nested
sequence, there are slight mismatches further away from the centre.

Since the sequence $R_n$, respectively any sequence of tilings $(\T_n)_n$ 
where each $\T_n$ contains $R_n$ as its
central patch, converges with respect to $d$, there is a unique limit 
$\A= \lim_{n \to \infty} \T_n$. The tiling $\A$ is the desired infinite 
tiling. Moreover we may now define the {\em hull} of $\A$. In the theory of
aperiodic tilings the hull turns out to be a central object of study.
The hull is the closure of the set of all translates of $\A$, i.e.
\[ \X(\A) = \overline{ \{ \A+t \; | \; t \in \R^2 \} }, \]
where $\overline{M}$ defines the closure of the set $M$ with respect to 
$d$, see again \cite[Chapter 4]{BG} for details. 

Since $\A$ has only finitely many prototiles, and all tiles are 
vertex-to-vertex, the set of all finite patches in $\A$ up to some given 
radius $r>0$ is finite, up to congurence (even up to translations). This 
property is called {\em finite local complexity} \cite{BG}. It holds for 
each tiling in $\X(\A)$ and for the set $\X(\A)$ as a whole. Hence by 
standard reasoning (\cite{RW, SCH}, 
see also \cite[Chapter 4]{BG}), $\X(\A)$ is compact with respect to $d$, 
hence $(\X(\A),d)$ is a compact metric space.

\section{Properties of the tilings}

With the help of the metric $d$ above we are now able to give
a precise definition of aperiodic tilings. A tiling is 
{\em aperiodic} if its hull does not contain any periodic tiling.
In particular, aperiodicity implies nonperiodicity. (The hull of
a periodic tiling $\T$ contains only translates of $\T$, so in
particular it contains {\em only} periodic tilings.)

Considering Figure \ref{fig:rn-3rek} one may get the impression that 
$\A$ is periodic. At least the arrangement of the $R'_{n-1}$ in the image 
seem to form a periodic pattern. This behaviour is well-known for
certain aperiodic structures like the chair tiling, the Robinson square 
tiling or the one-dimensional period doubling sequence, see \cite{BG, FH}.
Loosely speaking, a limit-periodic tiling is one that is the union
of infinitely many periodic packings with larger and larger periods
where the lattices $\Lambda_i$ of periods are nested sequences 
$\Lambda_1 \subset \Lambda_2 \subset \Lambda_3 \cdots$, possibly
up to a set of density zero. In fact, the exact definition of 
limitperiodicity uses spectral properties of the hull. We will not explain
this in detail (compare~\cite{BG}) since there is a simple geometric
sufficient condition ensuring limitperiodicity that applies to our examples.


\begin{thm}
$\A$ is the union of 2-periodic packings $M_n$ and a set ${\mathcal Z}$ 
of density zero. More precisely, $\A = {\mathcal Z} \cup 
\bigcup\limits_{n \in \N} M_n$, where 
\begin{multline*} \label{eq:rn-per}
M_n:= \{ R'_n + \big((2^n,0)+2^{n+1} \Lambda), \phi R'_n + \big((0,-2^n)
+2^{n+1} \Lambda), \phi^2 R'_n + \big((-2^n,0)+2^{n+1} \Lambda),\\ 
\phi^3 R'_n + \big((0,2^n)+2^{n+1} \Lambda) \}
\end{multline*}  
where $\Lambda = \langle (1,1), (1,-1) \rangle_{\Z}$ (i.e. the 
integer span of the two vectors $(1,1)$ and $(1,-1)$) and 
${\mathcal Z}$ is the set of all tiles in $\A$ whose diagonals
are contained in $\{ (x_1,x_2) \, | \, x_1= \pm x_2 \}$.
\end{thm}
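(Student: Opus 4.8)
The plan is to read the decomposition off the self-similar structure already isolated in the proof of Theorem~\ref{thm:lim-rn}: that each central patch $S_N\subseteq\A$ splits into four rotated copies of $R'_{N-2}$ together with a skeleton of tiles sitting on the two diagonals $\{x_1=\pm x_2\}$ (Figures~\ref{fig:rn-rek} and~\ref{fig:rn-3rek}), the arrows on that skeleton being pinned down by Lemma~\ref{lem:pfeildiag}. Three points have to be checked: that each $M_n$ is $2$-periodic, that $\mathcal Z$ together with the $M_n$ exhausts $\A$ with $\dens(\mathcal Z)=0$, and that each $M_n$ is genuinely a subfamily of $\A$ sitting at the stated positions.

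Periodicity I would treat first, as it is purely formal. Each of the four pieces constituting $M_n$ is, by construction, the union of the $2^{n+1}\Lambda$-translates of a single rotated copy of $R'_n$, hence is $2^{n+1}\Lambda$-periodic; therefore $M_n$ is invariant under the two linearly independent vectors $(2^{n+1},2^{n+1})$ and $(2^{n+1},-2^{n+1})$, which generate $2^{n+1}\Lambda$, and so is $2$-periodic. (Since $\Lambda$ is invariant under the quarter turn $\phi$, one even gets $\phi M_n=M_n$.) Note also that once $M_n\subseteq\A$ is known, $M_n$ is automatically a packing, being a subfamily of the tiling $\A$.

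For the covering $\A\subseteq\mathcal Z\cup\bigcup_nM_n$ the single-scale splitting suffices. Given a tile $T\in\A$, choose $N$ with $T\in S_N$ (possible since the $S_N$ exhaust the plane). If the diagonals of $T$ lie on $\{x_1=\pm x_2\}$, then $T$ is a tile of the central skeleton of $S_N$ and hence $T\in\mathcal Z$; otherwise $T$ lies in one of the four copies of $R'_{N-2}$ forming $S_N$, and these four copies are precisely the four motif pieces of $M_{N-2}$ at the origin of the lattice $2^{N-1}\Lambda$, so $T\in M_{N-2}$. This yields the inclusion. That $\dens(\mathcal Z)=0$ is then immediate: $\mathcal Z$ consists of the tiles strung along the two lines $x_1=\pm x_2$, of which a disk $B_R$ meets $\Theta(R)$ while containing $\Theta(R^2)$ tiles in all.

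It remains to show the reverse inclusion $M_n\subseteq\A$ (the inclusion $\mathcal Z\subseteq\A$ being trivial), and this is where I expect the real work. I would unfold the construction: iterating $P_m=\{P_{m-1}+(-2^{m-1},0),\,\phi P_{m-1}+(0,2^{m-1}),\,\phi^2P_{m-1}+(2^{m-1},0),\,\phi^3P_{m-1}+(0,-2^{m-1})\}$ from a high level $m=N$ down to level $n$ exhibits, inside $R_N$, congruent copies of $R'_n$ at an explicit finite set of positions, their arrows matching along shared diagonals by Lemma~\ref{lem:pfeildiag}. The task, and the main obstacle, is to identify this set of positions, intersected with any fixed bounded region, with $(2^n,0)+2^{n+1}\Lambda$ and its three $\phi$-rotates, that is, to check that all the lattice translates occur and not merely the four copies nearest the origin. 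Since $\A=\lim_N R_N$ agrees with $R_N$ on arbitrarily large central patches (Theorem~\ref{thm:lim-rn}), it then follows that every tile of every copy listed in $M_n$ really lies in $\A$.
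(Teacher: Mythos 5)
The parts you actually carry out are correct and run along the same lines as the paper: the $2$-periodicity of each $M_n$ is purely formal, just as you say; $\dens({\mathcal Z})=0$ is the same one-line counting; and your single-scale covering argument (every tile of $\A$ sits in some central patch $S_N$, which splits into the diagonal skeleton plus four rotated copies of $R'_{N-2}$ sitting exactly at the base positions $(\pm 2^{N-2},0)$, $(0,\pm 2^{N-2})$ of $M_{N-2}$) is, if anything, tidier than the corresponding sentence in the paper. But your proposal stops precisely where the content of the theorem lies: you label the inclusion $M_n\subseteq\A$ ``the main obstacle,'' describe what would have to be checked---that \emph{all} translates along $(2^n,0)+2^{n+1}\Lambda$ and its three $\phi$-rotates occur in $\A$, not merely the four copies nearest the origin---and then do not check it. That is a genuine gap, not a deferrable verification: this inclusion \emph{is} the limitperiodic structure, the assertion that each motif repeats with a full lattice of periods throughout $\R^2$. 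Note also that your covering argument cannot be recycled for it, since by design it only ever exhibits the four central motifs of each $M_{N-2}$ and says nothing about the far-away lattice translates.

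The paper closes exactly this point by the self-similar induction depicted in Figures~\ref{fig:rn-rek}, \ref{fig:rn-3rek} and~\ref{fig:limitper}: each $R'_{k+1}$ consists of four rotated copies of $R'_k$ plus diagonal tiles (arrows pinned by Lemma~\ref{lem:pfeildiag}), so every copy of $R'_{k+1}$ already known to lie in $\A$ contributes four copies of $R'_k$; starting from the four central copies of $R'_{N-2}$ in $S_N\subset\A$ and descending through the levels $N-2, N-3,\ldots,n$, the positions so produced fill out the lattice families of $M_n$ within the support of $S_N$, and letting $N\to\infty$ gives $M_n\subseteq\A$, with the level-$1$ case \eqref{eq:r1-per} shown in Figure~\ref{fig:limitper}. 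The bookkeeping that makes this rigorous---and which is absent from your sketch---is a coset computation: the four sub-copies of $R'_k$ inside the translates of $R'_{k+1}$ along $2^{k+2}\Lambda$, carried around by the four rotated families, land in the four distinct cosets of $2^{k+2}\Lambda$ inside $2^{k+1}\Lambda$ with the correct orientations, so that the level-$(k+1)$ families refine exactly to the level-$k$ families (equivalently $M_k\subseteq M_{k+1}$); combined with the observation that a tile at bounded distance from the origin can only lie in one of the four \emph{central} copies of $R'_{N-2}$ once $2^{N-2}$ exceeds that distance, this yields the missing inclusion. Without this step (or the figures standing in for it), what you have proved is only $\A\subseteq{\mathcal Z}\cup\bigcup_n M_n$, not the asserted equality of $\A$ with a union of $2$-periodic sub-packings.
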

\begin{proof}
By the proof of Theorem \ref{thm:lim-rn} we get that for any $n \in \N$
the central patch of $\A$ is $R_n$. Considering how patches of type 
$R'_{n-3}$ are located in $R_n$ we obtain inductively arbitrary large
parts of 2-periodic packings consisting of congruent copies of
$R'_k$, $\phi R'_k$, $\phi^2 R'_k$ and
$\phi^3 R'_k$ for any $k \in \N$, compare Figure \ref{fig:rn-3rek}. 
Figure \ref{fig:limitper} indicates the arrangement of congruent
copies of $R'_1$. Let $\Lambda = \langle (1,1), (1,-1) \rangle_{\Z}$.
Then the four 2-periodic sets
\begin{equation} \label{eq:r1-per}
 R'_1 + \big((2,0)+4 \Lambda), \phi R'_1 + \big((0,-2)+4 \Lambda), 
\phi^2 R'_1 + \big((-2,0)+4 \Lambda), \phi^3 R'_1 + \big((0,2)+4 \Lambda) 
\end{equation}
yield already half of the tiles of $\A$. Each of the four sets has
period vectors $(4,4)$ and $(4,-4)$. This situation is
indicated in Figure \ref{fig:limitper}. The arrangement of $R_n$ is
similar on all levels $2^n$, hence further periodic structures
in $\A$ are
\begin{multline} \label{eq:rn-per}
R'_n + \big((2^n,0)+2^{n+1} \Lambda), \phi R'_n + \big((0,-2^n)
+2^{n+1} \Lambda), \phi^2 R'_n + \big((-2^n,0)+2^{n+1} \Lambda),\\ 
\phi^3 R'_n + \big((0,2^n)+2^{n+1} \Lambda) 
\end{multline}  
For each $n$ the union of the four sets has density $\frac{1}{2}$.
Each of the four sets has period vectors $(2^{n+1},2^{n+1})$ and 
$(2^{n+1},-2^{n+1})$. It is easy to see that the union of these
sets for all $n \in \N$ contains all tiles of $\A$ except tiles
along the diagonals $x_1 = \pm x_2$, i.e. all tiles in ${\mathcal Z}$.
This set has density zero.
\begin{figure}
\includegraphics[width=130mm]{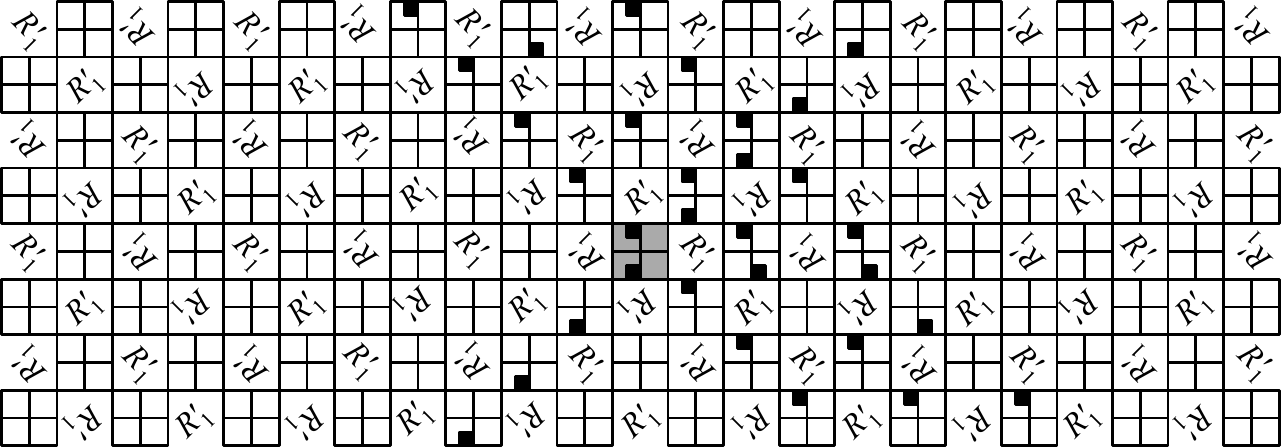}
\caption{The congruent copies of $R'_1$ in $\A$ form a periodic subset of
the tiling with density $\frac{1}{2}$. The centre of the shaded patch
is the origin.  \label{fig:limitper}}
\end{figure}
\end{proof}

\begin{rem} \label{rem:limper+diag}
Note that Equation \eqref{eq:rn-per} yields all tiles in $\A$
except the tiles on the diagonals $x_1=\pm x_2$, hence 
\eqref{eq:rn-per} together with Lemma \ref{lem:pfeildiag} yields 
the entire tiling.
\end{rem}

The next theorem is the key to all further results. This holds 
because the theory of substitution tilings is developed pretty
well, while there is no theory for the construction above yet.

\begin{thm} \label{thm:subst}
$\A$ can be generated by a primitive tile substitution rule.
\end{thm}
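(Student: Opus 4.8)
The plan is to construct an explicit substitution rule $\sigma$ on the four decorated prototiles of $\A$ (the arrowed, coloured unit squares) and then verify that the tiling $\A$ is a fixed point of $\sigma$ (up to the usual rescaling) that is produced by iterating $\sigma$ on a suitable seed patch. The natural inflation factor here is $2$, since the passage from $R_n$ to $R_{n+1}$ doubles the linear scale: each $R_n$ is built from four copies of $R_{n-1}$ placed at offsets $2^{n-1}$. So $\sigma$ should map a unit square to a $2\times 2$ block of four unit squares. The first step is therefore to read off, from the construction in Figures \ref{fig:arrow} and \ref{fig:rn-rek}, exactly which $2\times 2$ arrangement of decorated tiles each of the four prototiles inflates to. Because the decoration of each output tile is determined (via the ``top square'' rule) by the overlapping-square data, I expect each prototile, together with its arrow direction and colour, to determine its image block unambiguously.

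Concretely, I would first fix representatives of the four prototiles by their arrow directions and colours, exploiting the fourfold rotational symmetry: the map $\phi$ (rotation by $-\pi/2$) permutes the four prototiles cyclically, so it suffices to specify $\sigma$ on one prototile and extend equivariantly by $\sigma(\phi T)=\phi\,\sigma(T)$ up to the relevant translation. Next I would \emph{verify compatibility with the construction}: I must check that applying $\sigma$ to the central patch $S_{n+1}$ of $R_{n+1}$ reproduces $S_{n+2}$, i.e. that one substitution step realises one inflation step of the nested sequence $S_2\subset S_3\subset\cdots$ from Theorem \ref{thm:lim-rn}. The recursive structure already established there — $R_{n+1}$ assembled from four copies of $R'_{n-1}$ plus the diagonal ``skeleton'', with arrow directions pinned down by Lemma \ref{lem:pfeildiag} — is exactly the data I need: it tells me both the bulky (interior) part and how the tiles on the diagonals $x_1=\pm x_2$ fit in. Since $\sigma$ and the construction agree on the seed and both are governed by the same four-copies-plus-diagonal recursion, they agree on every $S_n$, and by passing to the limit in the metric $d$ they produce the same tiling $\A$.

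Finally I would establish \emph{primitivity}. By definition I need some $k$ such that every level-$k$ supertile $\sigma^k(T_i)$ contains congruent copies of all four prototiles. Here the fourfold symmetry helps again: a single inflation step already produces tiles carrying several distinct arrow/colour combinations, and iterating a bounded number of times forces all four rotational types to appear inside each supertile. I would pick the smallest $k$ (likely $k=1$ or $k=2$) for which direct inspection of the inflated blocks shows all four decorated prototiles occurring, and record that as the primitivity witness.

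The main obstacle I anticipate is bookkeeping the decorations correctly rather than any conceptual difficulty. The subtle point is the diagonal tiles: the tiles in $\mathcal Z$ on $x_1=\pm x_2$ are \emph{not} produced by simply tiling four copies of a lower-level patch, and their arrows obey the special directions of Lemma \ref{lem:pfeildiag}. I must make sure the proposed $\sigma$ generates precisely these diagonal tiles with precisely those arrow directions at every level, so that the substitution output matches $\A$ including the density-zero skeleton, not merely the $2$-periodic packings of Equation \eqref{eq:rn-per}. Verifying that the arrow and colour of each of the four tiles in every image block is forced — and globally consistent across adjacent blocks so that the top-square rule is respected — is the delicate part; once that matching is checked on the finitely many prototiles, the rest follows from the already-proved convergence and nesting.
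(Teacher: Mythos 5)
Your overall skeleton coincides with the paper's: an explicit block substitution $\sigma$ of inflation factor $2$ on the four decorated prototiles, a seed patch whose iterates converge in the metric $d$, an identification of the limit with $\A$, and a primitivity check. The genuine difference is in the identification step. You propose to prove directly, by induction, that one substitution step reproduces one step of the nested sequence $S_2\subset S_3\subset\cdots$ from Theorem \ref{thm:lim-rn}. The paper instead builds the fixed point $\Sc$ from the seed and proves $\Sc=\A$ by matching the limit-periodic structure of Remark \ref{rem:limper+diag}: it checks that all four supertiles $\sigma^3(T_i)$ carry the same rotation-invariant constellation of copies of $R'_1$ (recovering the packing \eqref{eq:r1-per}), then uses the observation that $\sigma(R_n)$ contains $R'_{n+1}$ together with $\sigma(\Sc)=\Sc$ to recover all packings \eqref{eq:rn-per}, and finally matches the arrows on the diagonals. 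Your route is legitimate and arguably more direct, but note it requires essentially the same compatibility lemma ($\sigma$ of a level-$n$ patch contains the level-$(n+1)$ patch, plus the diagonal bookkeeping via Lemma \ref{lem:pfeildiag}), so the work is comparable; the paper's detour exploits structure it has already proved. Your explicit primitivity step is a point in your favour, since the paper leaves primitivity to inspection of its figures.

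There is, however, one step in your plan that would fail: the claim that $\phi$ permutes the four prototiles cyclically, so that it suffices to specify $\sigma$ on one prototile and extend by $\sigma(\phi T)=\phi\,\sigma(T)$. The four prototiles of $\A$ are congruence classes distinguished by \emph{colour}; a rotated black tile is still a black tile (with a different arrow), so $\phi$ fixes each prototile class rather than permuting the four of them. The identity $\sigma(\phi T)=\phi\,\sigma(T)$ is the standard automatic convention for substitutions, but it yields no reduction: all four colour images must be specified separately by inspecting the construction, which is exactly what the paper's Figure \ref{fig:subst} does. That the four images are \emph{not} rotations of one another is visible in the paper's stated facts: $\sigma(T_3)=\sigma(T_4)$ although $T_3\neq T_4$, and every image $\sigma(T_i)$ has the same upper-right tile (a black $T_1$ with arrow pointing down-right) and the same lower-left tile (a $T_2$) --- these coincidences, which are later essential for the model-set result, are incompatible with the images being cyclic rotations of a single block (e.g. $\sigma(T_4)=\phi\,\sigma(T_3)$ together with $\sigma(T_3)=\sigma(T_4)$ would force $\sigma(T_3)$ to be $\phi$-invariant, contradicting that its upper-right and lower-left tiles have different colours). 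The broken symmetry is no accident: the height ordering in the construction (first copy on top, fourth on the bottom) destroys exact fourfold symmetry of the visible tiling. So you must tabulate all four images; if you defined $\sigma$ by equivariant extension from one prototile, your own compatibility check in the second step would fail, since the rule so defined does not reproduce $S_{n+2}$ from $S_{n+1}$.
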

\begin{proof}
This result is achieved by giving an appropriate substitution 
rule yielding the tiling $\A$. More precisely, we will show that this 
rule yields a substitution tiling that possesses the particular
structure (union of periodic packings) described by 
Remark \ref{rem:limper+diag}. 

This substitution rule is shown in Figure \ref{fig:subst}.
Let us denote this substitution rule by $\sigma$. Its prototiles 
are denoted by $T_1, T_2, T_3$ and $T_4$; they are the prototiles of $\A$. 
Figure \ref{fig:pf-subst-2} shows the action of $\sigma, 
\sigma^2, \sigma^3$ and $\sigma^4$ on $T_1$. Note that $\sigma(T_3)
= \sigma(T_4)$. Note also that there are no reflections involved:
all tiles in $\sigma(T_i)$ are direct congruent copies of the $T_i$, not
reflected congruent copies. 

To obtain an infinite substitution tiling in $\X_{\sigma}$ it is useful 
to consider a tiling $\Sc$ that is fixed under $\sigma$, i.e. 
$\sigma(\Sc)=\Sc$. The construction of such a tiling is standard: take 
a legal ``seed'', i.e. a patch $P$ that is contained in some 
$\sigma^n(T)$ for some prototile $T$, such that $\sigma(P)$ (or
 $\sigma^2(P)$ or  $\sigma^3(P)$ \ldots) contains a translate of
$P$ in its interior. Here we choose $P$ as the patch of four tiles
shown in Figure \ref{fig:pf-subst-3} left, consisting of three dark grey 
tiles of type $T_2$ and one black tile of type $T_1$. $P$ occurs in 
$\sigma^4(T_1)$, see Figure \ref{fig:pf-subst-2}. 

Applying $\sigma$ to $P$ one gets that $P$ is exactly the central patch 
of $\sigma(P)$, compare  Figure \ref{fig:pf-subst-3}. 
Hence $\sigma(P)$ is the central patch of $\sigma^2(P)$,
and inductively we get that $\sigma^n(P)$ is the central patch of 
$\sigma^{n+1}(P)$. Hence the sequence $\sigma_n(P)$ (or any sequence of
tilings $\Sc_n$ having $\sigma^n(P)$ as their central patch respectively)
converge to some tiling in the local topology. Let us denote this tiling by $\Sc$. 

\begin{figure} 
\includegraphics[width=120mm]{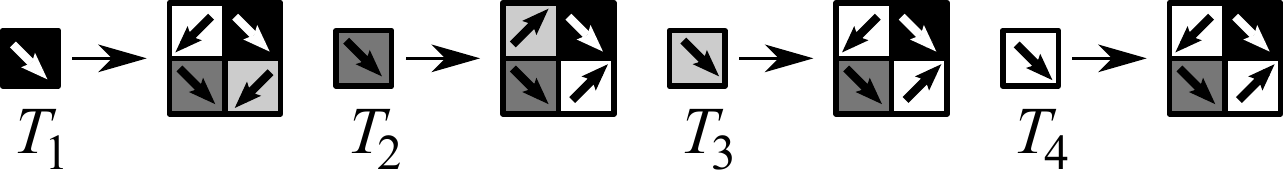}
\caption{The substitution rule $\sigma$ yielding the tiling $\A$. \label{fig:subst}}
\end{figure}

In order to show that $\Sc = \A$ we use the particular structure 
of $\A$ stated in Remark \ref{rem:limper+diag}. 
Consider $\sigma^3(T_1), \sigma^3(T_2), \sigma^3(T_3)$ and  $\sigma^3(T_4)$. 
The interior of each of the four patches contains four congruent 
copies of $R'_1$. On their boundaries these patches have halves of $R'_1$. 
(See for instance Figure \ref{fig:pf-subst-2}, the patch on the right
is the union of one copy of $\sigma^3(T_1), \sigma^3(T_2), \sigma^3(T_3)$ and 
$\sigma^3(T_4)$ each.) The entire constellation of congruent copies of
$R'_1$ and halves of $R'_1$ agrees on all four supertiles $\sigma^3(T_i)$, 
and it is invariant under rotation by $\pi/2$ about the centre of each 
$\sigma^3(T_i)$. All tiles in $\Sc$ lie edge to edge, hence all 
``supertiles'' $\sigma^3(T_i)$ lie edge-to-edge. Hence $\Sc$ contains
the same periodic arrangement as $\A$, i.e.~the one in Equation 
\eqref{eq:r1-per}. 

\begin{figure} 
\includegraphics[width=120mm]{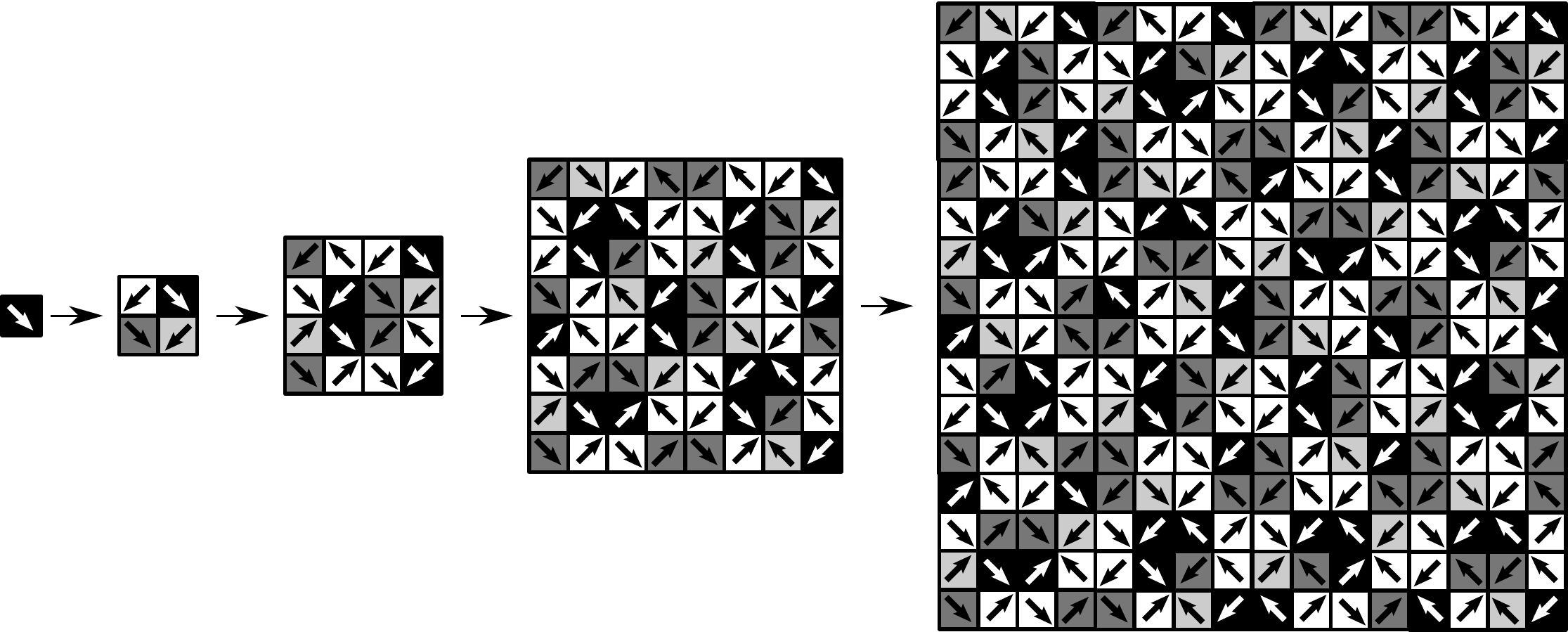}
\caption{The image shows $T_1$, $\sigma(T_1)$, $\sigma^2(T_1)$, 
$\sigma^3(T_1)$, and $\sigma^4(T_1)$ (from left to right). 
On the right it is indicated how $\sigma^4(T_1)$ consists
of congruent copies of $\sigma^3(T_1)$, $\sigma^3(T_2)$,
$\sigma^3(T_3)$ and $\sigma^3(T_1)$.    
A seed for $\A$ is found in $\sigma^4(T_1)$
(e.g. the middle of the fourth and the fifth row, compare
Figure \ref{fig:pf-subst-3}).  \label{fig:pf-subst-2}}
\end{figure}

Observe that $\sigma(R_1)$ contains $R'_2$, and---more 
generally---$\sigma(R_n)$ contains $R'_{n+1}$. (To see this one may
use Figure \ref{fig:arrow} together with Figure \ref{fig:subst}.)
Since $\sigma(\Sc)=\Sc$ holds, the tiling $\Sc$ 
contains also the periodic arrangements from Equation 
\eqref{eq:rn-per}. This shows
that $\Sc$ and $\A$ coincide everywhere except on the diagonals 
$\{ (x,y) \, | \, x=\pm y \}$. Considering the action of $\sigma$
it is easy to see that $\Sc$ has 
on each of the four branches of this set tiles with arrows that
show in the same directions, and that these directions coincide 
with the ones in $\A$. Altogether we obtain $\Sc=\A$.
\end{proof}

\begin{figure} 
\includegraphics[width=120mm]{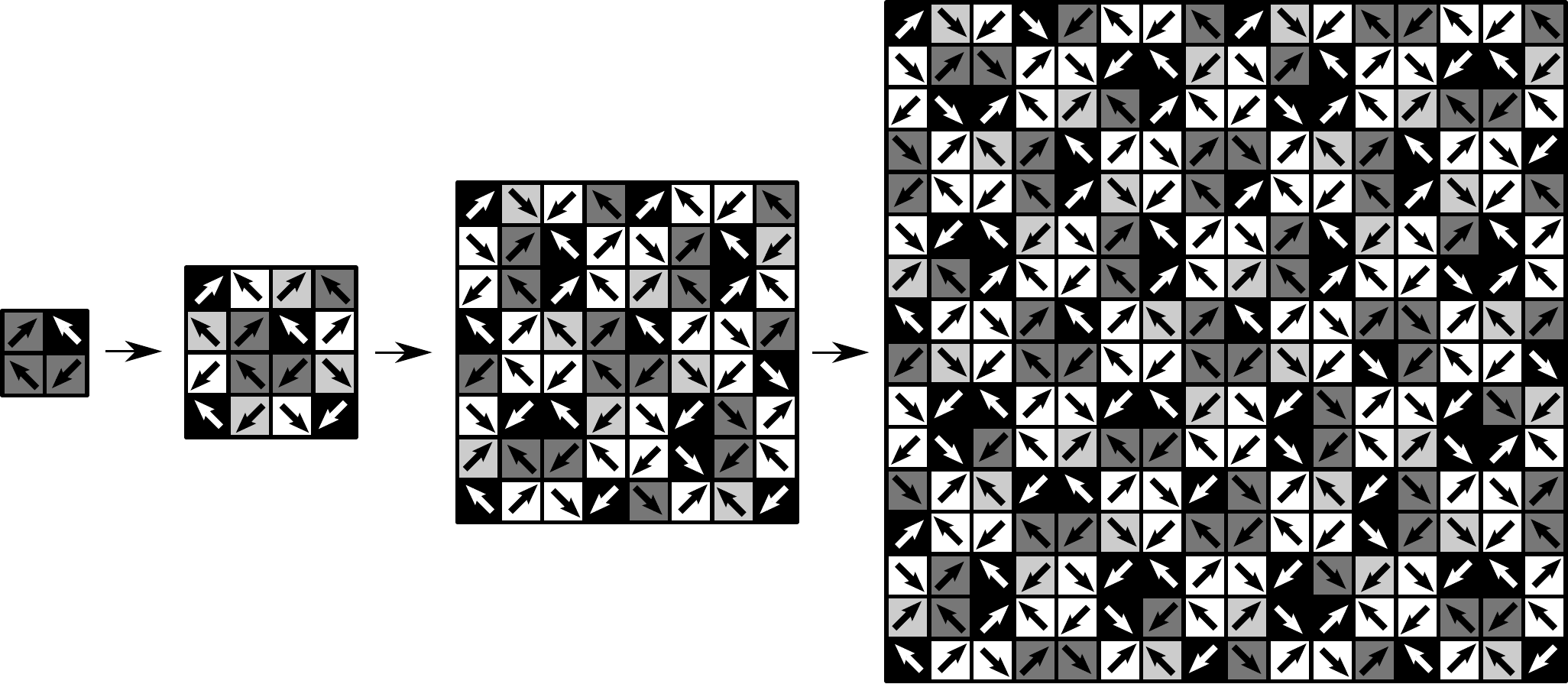}
\caption{The iterates under $\sigma$ of the small patch on the left
converge to $\A$. \label{fig:pf-subst-3}}
\end{figure}

Note that there is also the concept of the hull $\X_{\sigma}$ of a 
substitution. Since $\A$ can be generated by a primitive substitution
we have that $\X(\A)=\X_{\sigma}$ (see \cite{BG} for details). 
More important, we now obtain the following result easily.

\begin{thm}
All $\T \in \X(\A)$ are aperiodic. In particular $\A$ is aperiodic
(hence nonperiodic). 
\end{thm}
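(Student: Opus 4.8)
The plan is to deduce everything from the substitution structure established in Theorem~\ref{thm:subst}. Since $\A$ is generated by a \emph{primitive} substitution $\sigma$ and has finite local complexity, the hull $\X(\A)=\X_\sigma$ is minimal: primitivity forces repetitivity, so every translation orbit is dense, and each $\T\in\X(\A)$ satisfies $\overline{\{\T+t\mid t\in\R^2\}}=\X(\A)$. Consequently the assertion ``all $\T\in\X(\A)$ are aperiodic'' is nothing but the single statement that $\X(\A)$ contains no periodic tiling, and the whole theorem reduces to ruling out a periodic element of $\X(\A)$.

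Next I would observe that on a minimal hull the group of periods is the same for every tiling, so it suffices to treat $\A$ itself. Indeed, if $\T+t=\T$ and $\T'=\lim_k(\T+s_k)\in\X(\A)$, then $(\T+s_k)+t=\T+s_k$ for every $k$; since ``having period $t$'' is a closed condition with respect to $d$, the limit $\T'$ also satisfies $\T'+t=\T'$, and minimality gives the reverse inclusion. Hence $\X(\A)$ contains a periodic tiling if and only if \emph{every} tiling in $\X(\A)$ is periodic, i.e. if and only if $\A$ itself is periodic. Thus it is enough to prove that $\A$ is \emph{nonperiodic}; aperiodicity of all $\T\in\X(\A)$ then follows at once.

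It remains to show $\A$ is nonperiodic, and here I would use the limit-periodic decomposition $\A={\mathcal Z}\cup\bigcup_n M_n$ recorded in Remark~\ref{rem:limper+diag} and Equation~\eqref{eq:rn-per}. A period $t$ of $\A$ must map the unit grid to itself, so $t\in\Z^2$. The decisive point is that $\sigma$ is \emph{recognizable}: the hierarchical level of each tile, and in particular its membership in the periodic layers $M_n$, can be read off from a bounded neighbourhood, the arrow and colour markers supplied by the construction (the boundary arrows of Lemma~\ref{lem:pfeilaussen} and the diagonal ``skeleton'' arrows of Lemma~\ref{lem:pfeildiag}) locating the level-$n$ supertile boundaries. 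Granting this, a period $t$ must preserve the level-$n$ partition of $\A$ for \emph{every} $n$, hence must be a period of each $M_n$, so $t\in 2^{n+1}\Lambda$ for all $n$. Since $\bigcap_n 2^{n+1}\Lambda=\{0\}$, this forces $t=0$, i.e. $\A$ is nonperiodic.

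The main obstacle is exactly the recognizability step: establishing that the supertile level of a tile is determined locally, so that a global period is compelled to respect each periodic layer $M_n$. One clean route is to invoke the general theorem that a nonperiodic primitive FLC substitution admits unique composition, but that would be circular here, since nonperiodicity is precisely what we are after; so I would instead argue recognizability directly from the construction, showing that the diagonal skeleton of Lemma~\ref{lem:pfeildiag}, together with the edge-to-edge and vertex-to-vertex structure, pins down the level-$n$ supertile boundaries in \emph{any} $\T\in\X(\A)$. With recognizability in hand, the remaining steps above are routine.
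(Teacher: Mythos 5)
Your overall strategy is viable, but the proposal stops exactly where the real proof begins. You yourself flag the recognizability step as ``the main obstacle'' and then leave it as a plan (``I would instead argue recognizability directly from the construction\dots'') rather than an argument. That step is not a routine verification one can defer: it is the entire mathematical content of the theorem, and the paper's proof consists essentially of carrying it out. Concretely, the paper verifies unique identification of level-$1$ supertiles as follows: every level-$1$ supertile consists of four unit tiles sharing a common vertex; these four tiles have at least three distinct colours, and their arrows neither point toward the common vertex nor away from it; inspection shows that in any $\T \in \X(\A)$ this combination singles out exactly one family of vertices, namely the vertices of the congruent copies of $R'_1$, which are therefore the supertile centres. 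This is the local, checkable criterion your sketch never produces, and without it neither your claim that a period must preserve the level-$n$ partition for every $n$, nor the conclusion $t \in \bigcap_n 2^{n+1}\Lambda = \{0\}$, gets off the ground.

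Two further remarks on the framing. First, your circularity worry concerns only the implication ``nonperiodic $\Rightarrow$ unique composition'' (Solomyak's hard direction); the converse --- unique composition/recognizability $\Rightarrow$ aperiodicity for primitive FLC substitution tilings --- is the classical result \cite[Thm 10.1.1]{GS}, is not circular, and is exactly what the paper invokes. Second, once level-$1$ recognizability is in hand, that classical theorem yields aperiodicity of every $\T \in \X(\A)$ at once, so your minimality reduction (periods pass to limits, orbits are dense) and the lattice-intersection argument through the layers $M_n$ --- both correct as stated --- are heavier machinery than needed. The gap is thus not in your reductions; it is that the one local combinatorial fact on which everything hinges, that the supertile decomposition can be read off from the colours and arrows in a bounded window, is asserted rather than proved.
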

\begin{proof}
Since $\sigma$ is a primitive substitution and $\X(\A)$ is of finite 
local complexity, one can apply a classical result on substitution 
tilings \cite[Thm 10.1.1]{GS}: a primitive substitution tiling is 
aperiodic if in this tiling the level 1 supertiles can be identified 
in a unique way. (See also Solomyak \cite[Thm 1.1]{Sol} for the proof 
of the ``if and only if'' version of this result.)

For the tilings in $\X(\A)$ this is particularly simple:
any level 1 supertile consists of four tiles sharing a common vertex. 
These four tiles have at least three distinct colours, and the
arrows on the tiles do neither point to this common vertex nor
do they point away from it. This leaves only one possibility:
the vertices of the congruent copies of $R'_1$ are the centres of the supertiles. 
\end{proof}

There are two geometric properties of a tiling $\T$ that have strong consequences
on the dynamical properties of the hull $\X(\T)$ of $\T$. A tiling $\T$ is 
{\em repetitive} if for each $r>0$ there is $R>0$ such that each patch 
of radius less than $r$ is contained in each patch of radius $R$. 
If $R \in O(r)$ then $\T$ is called {\em linearly repetitive}.

Moreover, a tiling $\T$ has {\em uniform patch frequencies} if the
frequencies of all patches are well-defined. More precisely: if $P$
is a patch in $\T$ then let $N_P(B_r(x))$ denote the number of congruent
copies of $P$ in $\T \cap B_r(x)$, where $B_r(x)$ denotes the open ball
of radius $r$ centred in $x$. If for all patches $P$ in $\T$ the limit
\[ \lim_{r \to \infty} \frac{1}{\pi r^2} N_P(B_r(x)) \] 
exists uniformly in $x$ then $\T$ has {\em uniform patch frequencies}.
For a more thorough discussion of repetitivity or uniform patch frequencies 
see \cite{BG} or \cite{FR}. We omit it here since we need the terms only
to state the following result.

\begin{cor}
All $\T \in \X(\A)$ are linearly repetitive and have uniform patch frequencies.
\end{cor}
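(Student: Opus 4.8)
The plan is to derive both conclusions from the substitution structure established in Theorem \ref{thm:subst}, since the theory of primitive substitution tilings already provides the necessary machinery. The corollary asserts two properties for every $\T \in \X(\A)$: linear repetitivity and the existence of uniform patch frequencies. My strategy is to verify that $\sigma$ satisfies the standard hypotheses under which these properties are known to hold for all members of the hull, and then invoke the corresponding classical results.

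First I would record the properties of $\sigma$ that we already have in hand. By Theorem \ref{thm:subst} the tiling $\A$ is generated by a \emph{primitive} substitution rule $\sigma$, and we noted $\X(\A) = \X_{\sigma}$. The hull has finite local complexity, as established in Section 2. The key additional quantitative feature is that $\sigma$ has a well-defined linear expansion: geometrically, each application of $\sigma$ doubles all lengths (the level-$n$ supertiles live on scale $2^n$, matching the construction), so $\sigma$ is a primitive substitution with a constant inflation factor $\lambda = 2$ acting by a pure dilation. For such substitutions the substitution matrix is primitive, and by Perron--Frobenius it has a simple dominant eigenvalue with strictly positive left and right eigenvectors, which guarantees that the relative tile frequencies are well-defined and positive.

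Next I would cite the appropriate theorems. For \emph{linear repetitivity}: a primitive substitution tiling with finite local complexity and a genuine (non-degenerate) inflation is linearly repetitive; this is the standard result for primitive FLC substitution systems, and one may reference \cite{BG} or \cite{FR}. The mechanism is that each level-$k$ supertile already contains copies of every prototile by primitivity (uniformly in $k$), so any patch of radius $r$ reappears within distance $O(r)$ everywhere, because patches of radius $r$ sit inside level-$k$ supertiles for $k \sim \log_2 r$, and these supertiles recur with gaps bounded by a constant times their diameter. For \emph{uniform patch frequencies}: primitivity plus FLC yields unique ergodicity of the tiling dynamical system $(\X(\A), \R^2)$, and unique ergodicity is precisely equivalent to the uniform existence of the limit defining patch frequencies; again this is classical (see \cite{BG}, \cite{FR}, or the references \cite{LMS1}). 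Since linear repetitivity is itself known to imply unique ergodicity, one could even obtain the second conclusion as a consequence of the first.

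The main obstacle I anticipate is not conceptual but one of verifying hypotheses precisely enough to invoke the cited theorems without circularity. In particular I would need to confirm that our $\sigma$ is a substitution in the exact technical sense those theorems require: a recognizable (uniquely decomposable) primitive substitution with FLC and a diagonalizable expansion map. Recognizability and primitivity were essentially handled in the two preceding results (aperiodicity via unique identification of level-$1$ supertiles, and primitivity in Theorem \ref{thm:subst}), so the remaining care is mostly bookkeeping: checking that the equality $\sigma(T_3) = \sigma(T_4)$ does not obstruct primitivity of the substitution matrix, which it does not since a later power still mixes all four tile types. Once these hypotheses are in place, both conclusions follow immediately from the standard theory, so the proof itself will be short.
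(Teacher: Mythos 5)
Your proposal is correct and follows essentially the same route as the paper: both derive the corollary from Theorem \ref{thm:subst} by citing the classical theory of primitive FLC substitution tilings, namely that such tilings are linearly repetitive (the paper cites Solomyak, \cite[Lemma 2.3]{Sol}) and that linear repetitivity implies uniform patch frequencies (the paper cites Lagarias--Pleasants, \cite[Theorem 6.1]{LP}), which is exactly the chain you mention in your last alternative. Your worries about recognizability and a diagonalizable expansion are unnecessary for these two citations, but they do no harm.
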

\begin{proof}
By a result of Solomyak \cite[Lemma 2.3]{Sol} each primitive substitution 
tiling in $\R^2$ with finite local complexity is linearly repetitive. 

By a result of Lagarias and Pleasants \cite[Theorem 6.1]{LP} linear 
repetitivity implies uniform patch frequencies.
\end{proof}

The fact that our tilings possess the particular structure
(union of periodic packings) described by Remark \ref{rem:limper+diag}
is a hint that they are possibly {\em limitperiodic}. In this 
particular case this means that
they can be generated by a certain cut-and-project method using a 
lattice in $\R^2 \times (\Q_2)^2$, where $\Q_2$ denotes the field 
of 2-adic numbers. Structures of this kind are called
{\em model sets (with 2-adic internal space)}. Model sets are 
relevant since by a result of Hof \cite{hof, SCH, BG} all models 
sets show pure point diffraction.
It is beyond the scope of this paper to give details on this, but 
we may formulate the result and prove it using a simple to check
sufficient condition.

Properly speaking, model sets are discrete point sets, not tilings.
Hence our tilings are not model sets, but they are strongly related:
they are mutually locally derivable (mld) with model sets, meaning 
that there is a local rule transforming one into the other 
(see \cite{BG}).

\begin{thm}
All $\T \in \X(\A)$ are limitperiodic. Consequently, all 
$\T \in \X(\A)$ are mutually locally derivable with model sets,
hence pure point diffractive.
\end{thm}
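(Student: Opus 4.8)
The plan is to verify the \emph{geometric sufficient condition} announced before the theorem (see \cite{BG}): a tiling that, outside a subset of density zero, decomposes into periodic packings whose period lattices form a nested chain with trivial intersection is limitperiodic, and is mld with a model set whose internal space is the corresponding profinite --- here $2$-adic --- completion; pure point diffraction will then follow from Hof's theorem. The preceding structure theorem (the union-of-periodic-packings theorem), together with Remark \ref{rem:limper+diag}, supplies exactly this data for $\A$. By Equation \eqref{eq:rn-per} every tile off the diagonals $x_1=\pm x_2$ lies in a packing $M_n$ that is $2$-periodic with period lattice $L_n:=2^{n+1}\Lambda$, the remaining tiles form the density-zero set $\mathcal{Z}$ whose decorations are fixed by Lemma \ref{lem:pfeildiag}, and the lattices satisfy $L_1\supset L_2\supset L_3\supset\cdots$ with $\bigcap_{n}L_n=\{0\}$, so the resolution refines by the single prime $2$ at every step.

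First I would make the model set explicit. Matching the internal space announced in the paper, take physical space $\R^2$, internal space $H:=(\Q_2)^2$, and as lattice the diagonal image $\widetilde L:=\{(x,\iota(x)) \mid x\in\Lambda[\tfrac12]\}$, where $\Lambda[\tfrac12]$ is the $2$-divisible hull of $\Lambda$ and $\iota\colon\Lambda[\tfrac12]\hookrightarrow H$ is the canonical dense embedding into the $2$-adic completion. By strong approximation $\widetilde L$ is a cocompact lattice in $\R^2\times H$ whose projections to $\R^2$ and to $H$ are injective with dense image, so $(\R^2,H,\widetilde L)$ is a genuine cut-and-project scheme. Under $\iota$ the chain $L_1\supset L_2\supset\cdots$ corresponds to the chain of compact-open subgroups $2^{\,n+1}\Z_2^{\,2}\subset H$, and by Equation \eqref{eq:rn-per} and Lemma \ref{lem:pfeildiag} the decorated prototile sitting at a given tile centre is a locally constant --- hence $2$-adically determined --- function of the position. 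Consequently the centres of the tiles of each of the four types form model sets $\{x\in\Lambda[\tfrac12]\mid\iota(x)\in W_i\}$ for compact-open windows $W_i\subset H$ that partition the full window $W=\bigcup_i W_i$ up to a Haar-null subset. Clopen windows have empty topological boundary, so these model sets are automatically regular, and $\A$ is mld with their colour-decorated union.

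The genuine technical work, and the step I expect to cost the most care, is the verification that the full decoration --- colour \emph{and} arrow --- of every tile is a locally constant function of its $2$-adic coordinate. Off the diagonals this is delivered by the periodic description in Equation \eqref{eq:rn-per}, which assigns to each tile a well-defined $M_n$ and hence a fixed residue modulo $L_n$; on the density-zero set $\mathcal{Z}$ the arrow directions are pinned down by Lemma \ref{lem:pfeildiag}. One must check that these assignments really produce compact-open (not merely Borel) windows $W_i$ covering $W$ up to a Haar-null subset, the Haar-null part being precisely the image of $\mathcal{Z}$; since it carries no mass, it affects neither regularity nor diffraction.

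Finally I would conclude. By Hof's theorem \cite{hof, SCH, BG} every regular model set is pure point diffractive, and mutual local derivability preserves pure point diffraction, so $\A$ is limitperiodic and pure point diffractive. Being (mld with) a model set is a property of the whole hull: since $\X(\A)=\X_{\sigma}$ is the hull of the primitive substitution of Theorem \ref{thm:subst}, the local derivation rule induces a homeomorphism of $\X(\A)$ onto the hull of the associated model set, every element of which is again a regular model set. Hence all $\T\in\X(\A)$ are limitperiodic, mld with model sets, and pure point diffractive.
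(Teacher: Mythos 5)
Your route is genuinely different from the paper's, and it contains one step that fails as written. The paper never constructs the cut-and-project scheme explicitly: it observes that $\sigma$ is a primitive \emph{block} substitution with integer inflation factor $2$, invokes the Lee--Moody--Solomyak machinery \cite{LMS1, LMS2}, which reduces everything to exhibiting a \emph{modular coincidence}, and then uses \cite{FS}, by which a modular coincidence follows from a plain coincidence in the supertiles; that coincidence is checked in one line (the upper right tile of each of $\sigma(T_1),\dots,\sigma(T_4)$ is the same black $T_1$ with the same arrow). You instead build the $2$-adic CPS directly from the decomposition $\A = {\mathcal Z} \cup \bigcup_{n} M_n$. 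That is a legitimate, more self-contained strategy --- it is essentially what the cited machinery does behind the scenes --- but it forces you to do the window analysis by hand, and that is exactly where your argument breaks.

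The failure point is the claim that the windows $W_i$ are compact-open, together with the sentence that clopen windows make regularity automatic. The $W_i$ \emph{cannot} be clopen. Each $W_i$ is the increasing union over $n$ of the clopen balls $\iota(c) + 2^{n+1}\Z_2^2$ attached to the cosets of $L_n = 2^{n+1}\Lambda$ resolved at level $n$, so it is open; if it were also compact it would be a \emph{finite} union of such balls, hence the tiles of type $i$ would occupy finitely many full cosets of some fixed $2^{N}\Lambda$, and the whole tiling would be $2^{N}\Lambda$-periodic, contradicting the aperiodicity established earlier. You can also see the contradiction inside your own setup: the internal images of the diagonal positions of ${\mathcal Z}$ lie in none of the $W_i$ (no diagonal position belongs to any resolved coset), so if all $W_i$ were clopen, the complement $W \setminus \bigcup_i W_i$ would be a non-empty open subset of $W$, hence of positive Haar measure, contradicting your statement that it is Haar-null. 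The statement your proof actually needs is: each $W_i$ is open, its topological boundary is contained in the $2$-adic closure of the image of the diagonal positions, and that closed set is Haar-null; regularity of the model sets then follows from the \emph{null boundary}, not from clopen-ness, and the tiles of ${\mathcal Z}$ themselves must be accounted for by assigning boundary points to windows consistently with Lemma \ref{lem:pfeildiag}. With that repair (a standard but not cosmetic one), the remainder of your argument --- Hof's theorem for regular model sets, and the transfer of the model-set property across the hull $\X(\A)=\X_\sigma$ --- goes through.
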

\begin{proof}
Since $\A$ is a primitive substitution tiling with integer scaling
factor (in this case 2) and $\sigma$ is a {\em block substitution} (since
each unit square is mapped to four unit squares in a $2 \times 2$
grid) we can apply a result in \cite{LMS1} and \cite{LMS2}:
we need to show that $\sigma$ has a modular coincidence. 
By a result in \cite{FS}
this is the case if there is a {\em coincidence} in the supertiles.
For this it suffices to note that the upper right tile in $\sigma(T_1),
\sigma(T_2), \sigma(T_3)$ and $\sigma(T_4)$ is a black tile $T_1$
with its arrow pointing down right, compare Figure \ref{fig:subst}.
(The lower left tile in all four cases is $T_2$, yielding a second
coincidence.) For further details see \cite{LMS1, LMS2, FS, BG}.
\end{proof}

The fact that $\A$ is pure point diffractive has consequences if one 
imagines $\A$ as a physical solid: assume that in such a solid the four 
different types of atoms (or 
molecules) are arranged in the same pattern as the four different 
tiles in $\A$. A diffraction experiment would then show a diffraction
image with bright spots (``Bragg peaks'') and (ideally) no diffuse parts.

Now that we have obtained several results on the arrowed tiling $\A$
we turn our attention to the naked tiling using no decoration at all,
compare Figure \ref{fig:gen0-3}. Let us called the tiling obtained 
by the same construction but with undecorated squares ${\mathcal N}$. 

A tiling $\T_1$ is
called {\em locally derivable} from a tiling $\T_2$ if there is a local 
rule transforming $\T_2$ into $\T_1$. Two tilings $\T_1, \T_2$ are
called {\em mutually locally derivable} if $\T_1$ is locally
derivable from $\T_2$ and vice versa \cite{BG}. 

\begin{lem}
The naked tiling ${\mathcal N}$ is locally derivable from the
arrowed tiling $\A$.
\end{lem}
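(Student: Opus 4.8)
The plan is to exhibit an explicit local rule that reconstructs $\mathcal{N}$ from $\A$. The key observation is that ``local derivability'' only requires that the tile occupying each position in $\mathcal{N}$ be determined by a bounded-radius patch of $\A$ around that position. The two tilings share exactly the same underlying partition of the plane into unit squares (both arise from viewing the same stack $P_n=R_n$ from above); they differ only in that $\A$ carries colours and arrows on each unit square, while $\mathcal{N}$ carries the coarser ``naked'' prototiles: the large square of edge length two, the small unit square, the $1\times 2$ rectangle, and the chair. So deriving $\mathcal{N}$ from $\A$ amounts to recovering, for each unit square of $\A$, which naked prototile it belongs to and how that prototile is positioned --- i.e.\ grouping the decorated unit squares of $\A$ into the visible fragments of the large squares that make up $\mathcal{N}$.

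First I would make precise what local information is available. Each tile of $\A$ is a decorated unit square $K=[k,k+1]\times[m,m+1]$, and by construction its decoration is inherited from the \emph{topmost} large square (a copy of $R_0$) covering $K$. Since each $R_0$ is a $2\times 2$ block subdivided into four coloured unit squares with arrows (Figure \ref{fig:arrow}), the colour and arrow of $K$ encode which of the four quadrant positions $K$ occupies within its governing large square, and the arrow points toward the outward corner of that large square. Thus from the decoration of a single tile $K$ one reads off the location of $K$ relative to the centre of the large square to which it belongs. This is the crucial feature: the arrow/colour data locally pins down the $2\times 2$ alignment of the top square at each point.

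Next I would describe the rule itself. Given a tile $K$ of $\A$, inspect $K$ together with its immediate neighbours within a fixed radius (radius $2$ suffices, since a large square has edge length two). Using the colour and arrow, determine the centre $c$ of the governing large square and hence the full $2\times 2$ footprint of that top square. Then, by comparing with the decorations of the neighbouring tiles, decide which of those four unit cells are also ``visible'' (i.e.\ governed by the same top square) and which are overlaid by a higher square: a neighbouring unit cell belongs to the same visible fragment precisely when its decoration is consistent with the same centre $c$. The maximal connected group of unit cells sharing the governing square and all visible from above forms one naked tile; its shape (full $2\times2$ square, a $1\times2$ rectangle, an L-shaped chair, or a single unit square) is exactly one of the four prototiles of $\mathcal{N}$. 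Assigning to $K$ the naked tile obtained this way, and checking consistency, yields $\mathcal{N}$ locally from $\A$.

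The main obstacle is verifying that a \emph{bounded} radius genuinely suffices --- that the visible fragment containing $K$, and in particular its full extent and shape, is always determined by a patch of $\A$ of fixed size independent of the position. This is where the arrow-outward property (Lemma \ref{lem:pfeilaussen}) and the rigid $2\times2$ block structure do the work: because every large square has edge length two, a fragment can extend at most two units in each direction, so a window of radius $2$ around $K$ already contains all unit cells that could share $K$'s governing square, and the decorations in that window determine unambiguously which cells are co-visible. I would make this precise by a short case analysis over the four possible colour/arrow types of $K$, in each case exhibiting the finite neighbourhood pattern that reads off the correct naked prototile, thereby confirming that the derivation is local.
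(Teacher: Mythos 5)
Your proposal is correct and follows essentially the same route as the paper's proof: the arrow (and colour) decoration of each unit square points away from the centre of its governing large square, so it locally determines the edges of the overlapping large squares, and these edges determine ${\mathcal N}$. The paper states this in three sentences, while you spell out the bounded radius (radius $2$) and the grouping of co-visible cells explicitly, which is a more detailed but not different argument.
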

\begin{proof}
It suffices to give a local rule to transform $\A$ into ${\mathcal N}$. 
Considering that in the arrow decoration of $\A$ the arrows point always
away from the centre of the large square, the arrows determine
locally the edges of the large (overlapping) squares in a unique 
way. The edges of the overlapping squares determine ${\mathcal N}$. 
\end{proof}

Two tilings that are mutually locally derivable share a lot of 
properties (aperiodicity, repetitivity, pure point diffraction,
uniform patch frequency...) Thus it would suffice to give a local rule
that transforms ${\mathcal N}$ into $\A$. We probably found such a rule, 
but unfortunately we were yet unable to prove that it really works.

{\bf Problem:} Are $\A$ and ${\mathcal N}$ mutually locally derivable?

As an intermediate step, one may consider whether $\A$ is mutually 
locally derivable with the tiling obtained from $\A$ by deleting the 
colours. It is a simple exercise to see that this is indeed true. 

\section{Remarks and Outlook}

The original construction of inductive rotation tilings found by the 
second author used to place the origin close to the leftmost part of the
patches $P_n$. Hence the ``limit'' of this sequence fills only a
quarter plane. Four copies of this limit could be used to fill the
entire plane. The resulting tilings are the same as the ones described
in this paper. The construction in this paper is more adapted to
the notion of convergence of sequences of tilings used here.

Here we showed that the tiling $\A$ is nonperiodic by showing that
$\A$ is a substitution tiling and then applying the result of Solomyak
that a substitution tiling is nonperiodic if $\sigma^{-1}$ is unique.
We found an alternative proof of nonperiodicity by direct means, using
only the limitperiodic structure of $\A$. For the sake of briefness we
omit the alternative proof here.

The construction presented here uses squares and rotations by 
$\frac{\pi}{2}$. The second author found similar constructions for
triangles and rotations by $\frac{2\pi}{3}$ as well as for hexagons
and rotations by $\frac{\pi}{3}$. For an artistic application of these 
constructions see {\tt http://hofstetterkurt.net/ip}, see also 
\cite{par}.

The substitution rule $\sigma$ used to generate the tiling $\A$
uses four prototiles, but $\sigma(T_3)=\sigma(T_4)$. Probably
one of these tiles is redundant and everything works for a substitution
for three prototiles as well. We prefer to use the four-colour
version since it carries more information, so it might make some 
arguments more clear.

We studied the tilings $\A$ and ${\mathcal N}$. We have good evidence (but no
proof so far) that $\A$ and ${\mathcal N}$ are mutually locally derivable.
What about other decorations? There are decorations of the large square 
$Q$ leading to 2-periodic tilings. So the construction yields at least
two distinct mld classes (one aperiodic, one 2-periodic). Are there 
more mld classes?

The tilings ${\mathcal N}$ and $\A$ are obtained by looking on $R_n$ from 
``above''. Are the tilings obtained by looking from below congruent to
${\mathcal N}$ respectively $\A$? Are they at least mutually locally 
derivable with ${\mathcal N}$ respectively $\A$?

We may realise the sets $R_n$ also with tiles with ``thickness'' in $\R^3$. 
Now above and below have a precise meaning. The construction for
the sets $R_n$ now has to be described in three dimensions. Is there
such a construction such that the height of all $R_n$ is bounded by a
common constant? Are there arbitrary high stairs?

\section*{Acknowledgement}
The second author wants to express his gratitude to the many people 
from Mathematics, Art, Computer Science and Textile Technology
who are supporting the work on Inductive Rotation Tilings, enabling new
applications of aperiodic tilings in Arts and Science.

\end{document}